\documentclass[12pt]{amsart}
\usepackage{amssymb}
\usepackage{t1enc}
\usepackage[mathscr]{eucal}
\usepackage{parskip}
\usepackage{xcolor}
\usepackage{tikz}
\usepackage{tikz-cd}
\usepackage[hidelinks]{hyperref}
\numberwithin{equation}{section}
\usepackage[margin=2.9cm]{geometry}
\usepackage{booktabs}
\usepackage{enumerate}
\usepackage{bm}
\usepackage{xfrac}

\usepackage{listings}
\theoremstyle{plain}
\newtheorem{thm}{Theorem}[section]
\newtheorem{lem}[thm]{Lemma}
\newtheorem{cor}[thm]{Corollary}
\newtheorem{prop}[thm]{Proposition}
\newtheorem{conj}[thm]{Conjecture}

 \theoremstyle{definition}
\newtheorem{defn}[thm]{Definition}
\newtheorem{rem}[thm]{Remark}
\newtheorem{ex}[thm]{Example}

\newcommand{\disc}{\mathrm{disc}}

\definecolor{mauve}{HTML}{8B0000}
\definecolor{dkgreen}{HTML}{008800}
\usepackage{seqsplit} 
\makeatletter
\@namedef{subjclassname@2020}{\textup{2020} Mathematics Subject Classification}
\makeatother

\begin{document}
%%%%%%%%%%%%%%%%%%%%%%%%%%%%%%%%%%%%%%%%
\title{Monogenic fields of cryptographic size}

\author[Adhikari]{Swechchha Adhikari}
\address{Department of Mathematics \\ Brigham Young University}
\email{adhikar6@byu.edu}

\author[Plott]{Daphne Plott}
\email{daphneplott@gmail.com}

\author[Torgersen]{Parker Torgersen}
\email{parker.23.torgersen@gmail.com}

\subjclass[2020]{Primary: 11R09; Secondary: 11R04, 11R21}
%%%%%%%%%%%%%%%%%%%%%%%%%%%%%%%%%%%%%%%%

\begin{abstract}
For a monic irreducible $f\in\mathbb{Z}[x]$ of degree $n$ and an integer $c$, we study the palindromic transform $F(x)=x^n f(x+x^{-1}+c),$ which produces a polynomial of degree $2n$. We give a discriminant formula $\disc(F)=f(c+2)f(c-2)\disc(f)^2,$ sufficient conditions for the irreducibility of $F$, and a criterion showing that $F$ is monogenic when $f$ is monogenic and $f(c+2)f(c-2)$ is squarefree, together with a matching non-monogenicity criterion. Iterating the transform yields monogenic number fields of degree $2^k n$ from a fixed base. As an explicit example, we construct a monogenic field of degree $512$ from $x^4+x+1$.
\end{abstract}

\maketitle

\section{Introduction}
A number field whose ring of integers is generated by a single element is called \emph{monogenic}. Classifying all monogenic number fields has been an open problem in number theory since Hasse \cite{Hasse1963Zahlentheorie} proposed it in the 1960s. While the complete characterization still remains unknown, Gaál provides some classification for number fields of lower degree in \cite{Gaal}, and several works also construct explicit infinite families of such number fields in small degrees \cite{Lavallee2005Dihedral,Spearman2006PSL25,Lavallee2012PSL27}. In arbitrary degree, these fields are known to be abundant: Kedlaya \cite{Kedlaya2012Squarefree} shows that for every $n \ge 2$ there exist infinitely many monogenic degree-$n$ fields with Galois group $S_n$; Jones \cite{jones2019brief} gives infinite families $x^n + A(Bx+1)^m$ that are monogenic for infinitely many $A$; and for pure fields Nguyen-Dang and Hung \cite{nguyen_dang_hung_2026} give a criterion implying, for instance, that $x^n - \operatorname{rad}(n)$ is monogenic for every $n \ge 2$. In each of these the degree is fixed within the family. Our interest is instead in an iterable mechanism that reaches arbitrarily large degrees, such as Castillo's construction~\cite{castillo2022dynamical}, obtained by iterating a fixed quadratic $t^2-\nu$.

Constructing monogenic fields at cryptographic sizes is motivated in part by lattice-based cryptography, specifically through the ring learning with errors (RLWE) problem \cite{lyubashevsky_peikert_regev_2010}. We refer to number fields of degree \(2^9\) or higher, as used in RLWE-based schemes such as NewHope \cite{newhope}, as cryptographic size. Eisentr\"ager, Hallgren, and Lauter \cite{eisentrager2014weak} list six conditions that are jointly sufficient to produce an attack on the search version of RLWE; monogenicity is one of them, required for the reduction from RLWE to PLWE. Explicit monogenic constructions at cryptographic sizes are therefore a step toward understanding which rings may give rise to weak RLWE instances.

Grossman and Kim \cite{Grossman2015AnIF} found an infinite family of non-abelian monogenic number fields of degree 6. We generalize their construction to large degrees of the form $2^kn$ through an explicit palindromic transform.

\begin{defn}
    A polynomial $f(x)$ is \emph{palindromic} if $x^nf(\tfrac1 x )=f(x)$, where $n$ is the degree of $f(x)$. 
\end{defn}

An intuitive definition is that when the coefficients of $f(x)$ are listed in order, including any coefficients equal to zero, this list stays the same when reversing the order. The polynomial $f(x) = x^3 + 2x^2 + 2x + 1$ is palindromic, but $f(x) = x^3 + 3x^2 + 2x  + 1$ is not.

Given a monic irreducible $f \in \mathbb{Z}[x]$ of degree $n$ and an integer $c$, define \[F(x) = T_c(f)(x) = x^n f\!\left(\frac{1}{x} + x + c\right).\] 
This produces a palindromic polynomial $F(x)$ of degree $2n$. To construct monogenic number fields at large sizes, we
\begin{itemize}
    \item start with a small base polynomial $f$ of degree $n$ where irreducibility and monogenicity can be verified directly,
    \item show that the transformation preserves the properties of irreducibility and monogenicity under certain criteria, and
    \item iterate the transform, verifying these conditions at each step, to obtain monogenic fields of degrees $n, 2n, 4n, \ldots,2^kn$ for each $k$.
\end{itemize}

Our first result gives sufficient conditions for $F(x)$ to be irreducible over
$\mathbb{Q}$.

\begin{thm}\label{theorem:main-irreducibility}
    Let $f(x) \in \mathbb{Z}[x]$ be an irreducible polynomial of degree $n$. Let $c$ be any integer. Construct $F(x) = x^nf(\tfrac 1 x + x + c)$.

    \begin{enumerate}
    \item If either $|F(1)|$ or $|F(-1)|$ is not a perfect square, then $F(x)$ is irreducible.
    \item If $F(1)$ and the middle coefficient of $F(x)$ (the coefficient of $x^n$) have opposite signs, then $F(x)$ is irreducible.
    \item If the middle coefficient of $F(x)$ is $0$ or $\pm 1$, then $F(x)$ is irreducible.
    \end{enumerate}
\end{thm}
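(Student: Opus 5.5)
The approach is to reduce everything to a single structural fact: if $F$ is reducible, then it factors as $F = G\cdot G^{*}$, up to a sign, where $G\in\mathbb{Z}[x]$ is monic of degree $n$ and $G^{*}$ is its reversal. Parts (1)–(3) then become elementary consequences of this factorization. Throughout I take $f$ monic, as in the setup preceding the theorem, so that $F$ is monic of degree $2n$.

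\textbf{Step 1 (degree count).} Let $\theta$ be a root of $F$ and put $\alpha=\theta+\theta^{-1}+c$. Since $F(\theta)=\theta^n f(\alpha)$ and $\theta\neq 0$ (because $F(0)=1$), $\alpha$ is a root of $f$. As $f$ is irreducible, $[\mathbb{Q}(\alpha):\mathbb{Q}]=n$. Moreover $\theta$ is a root of $x^2-(\alpha-c)x+1\in\mathbb{Q}(\alpha)[x]$, so $[\mathbb{Q}(\theta):\mathbb{Q}]\in\{n,2n\}$. Hence if $F$ is reducible, every irreducible factor has degree exactly $n$. So $F=GH$ with $G,H$ monic irreducible of degree $n$ in $\mathbb{Z}[x]$, using Gauss's lemma.

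\textbf{Step 2 (the factors are reversals of each other).} The roots of $F$ come in pairs $\{\theta,\theta^{-1}\}$, one pair for each root $\alpha$ of $f$. The roots of $G$ map bijectively onto the $n$ roots of $f$ under $\theta\mapsto\theta+\theta^{-1}+c$: the images are roots of $f$, and the map is injective on the roots of $G$ because $\mathbb{Q}(\theta)=\mathbb{Q}(\alpha)$ and the conjugates of $\alpha$ are distinct. So $G$ contains exactly one root from each pair, and $H$ has roots $\{\theta^{-1}\}$. Writing $G=\sum a_i x^i$ with $a_n=1$, this gives $a_0=\pm1$ and $H=a_0\,x^nG(1/x)$, with $\varepsilon:=a_0\in\{\pm1\}$.

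\textbf{Step 3 (reading off the three criteria).} With $F=\varepsilon\, G(x)\cdot x^nG(1/x)$, evaluating at $x=\pm1$ gives
\[
F(1)=\varepsilon\,G(1)^2,\qquad F(-1)=\varepsilon\,G(-1)^2 .
\]
So both $|F(\pm1)|$ are perfect squares, which proves (1) by contraposition. Comparing the coefficient of $x^n$ in the product gives the middle coefficient
\[
\varepsilon\sum_{i=0}^n a_i^2 .
\]
This has the same sign $\varepsilon$ as $F(1)$, unless $F(1)=0$, which proves (2). It also has absolute value at least $a_0^2+a_n^2=2$, so it is never $0$ or $\pm1$, which proves (3).

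The main obstacle is Step 2: one must justify that a proper factor takes exactly one root from each reciprocal pair, rather than some other arrangement, and that the sign bookkeeping makes the cofactor literally $\varepsilon$ times the reversal. The degree argument in Step 1 handles this by ruling out factors of degree other than $n$. I would also treat separately the degenerate edge cases where $\theta=\theta^{-1}$, i.e.\ $\alpha=c\pm2$. This forces $f(c\pm2)=0$, hence $n=1$, and can be checked directly.
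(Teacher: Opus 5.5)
Your proof is correct, but it reaches the key structural fact by a different route from the paper. The paper works only with polynomial identities. It first shows that a factorization of the palindromic $F$ is either into even-degree palindromic factors or of the form $\pm g\cdot g^{rev}$ (Lemma~\ref{factorization}). It rules out the first case by inverting $T_c$ on palindromic polynomials through the shifted Dickson polynomials $h_k$, combined with injectivity and multiplicativity of $T_c$ (Lemma~\ref{Reverse transformation}, Proposition~\ref{Irreducible => no palindromic factors}). It then applies Lemma~\ref{gxg^rev factoring criteria} to the second case.

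You replace the first two stages with the degree bound $[\mathbb{Q}(\theta):\mathbb{Q}(\alpha)]\le 2$ plus an embedding argument. That bound is the content of the paper's Proposition~\ref{prop:subset}, which the paper uses only later, for monogenicity. Your Step~3 is then the paper's criteria lemma, with a cleaner proof of (3) via $a_0^2+a_n^2=2$.

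What your route buys is brevity and a sharper conclusion. A reducible $F$ has exactly two irreducible factors, both of degree $n$, and $F=\pm G\cdot x^nG(1/x)$; equivalently, $\mathbb{Q}(\theta)=\mathbb{Q}(\alpha)$. So palindromic factors need no separate treatment. What the paper's route buys is that it avoids field extensions and never uses monicity.

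That last point matters slightly. The theorem as stated assumes only that $f$ is irreducible, while you use $F(0)=1$ to get $a_0=\pm1$. The repair is short:
\begin{enumerate}
\item An irreducible $f\in\mathbb{Z}[x]$ is primitive, and hence so is $F$. If a prime $p$ divided every coefficient of $F$, then $\bar f(x+x^{-1}+c)=0$ in $\mathbb{F}_p(x)$, which forces $\bar f=0$ because $x+x^{-1}+c$ is transcendental over $\mathbb{F}_p$.
\item Gauss's lemma then still gives $H=\pm x^nG(1/x)$ with $G$ primitive and $a_0a_n\neq 0$, whence $\sum a_i^2\ge 2$.
\end{enumerate}

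There are also two harmless slips. First, $F(-1)=\varepsilon(-1)^nG(-1)^2$, not $\varepsilon G(-1)^2$; only $|F(-1)|$ matters, so nothing breaks. Second, the degenerate case $\theta=\theta^{-1}$ needs no separate check, since your Step~2 already works when root pairs are counted with multiplicity.
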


The next theorem gives sufficient conditions for $F(x)$ to be monogenic. The main mechanism behind it is the discriminant identity
\[
    \disc(F) = f(c+2)\,f(c-2)\,\disc(f)^2,
\]
proved in Theorem~\ref{thm:disc}. Recall that an integer is squarefree if it is not divisible by the square of any prime. We show that the monogenicity of $F(x)$ reduces to a squarefreeness condition on $f(c+2)f(c-2)$.

\begin{thm}
\label{thm:monogenic}
Let $f(x)\in\mathbb{Z}[x]$ be monic, irreducible, and monogenic. Let $c\in\mathbb{Z}$ and define $F(x)=x^n\,f\!\left(x+\frac{1}{x}+c\right)$. Suppose that $\beta$ is a root of $F(x)$. If $f(c+2)f(c-2)$ is squarefree and $F(x)$ is irreducible, then $\mathcal{O}_{\mathbb{Q}(\beta)}=\mathbb{Z}[\beta]$; in particular $K=\mathbb{Q}(\beta)$ is monogenic.
\end{thm}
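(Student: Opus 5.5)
The plan is to combine the discriminant identity of Theorem~\ref{thm:disc} with the index formula
\[
\disc(F)=[\mathcal{O}_K:\mathbb{Z}[\beta]]^2\,\disc(K)
\]
and to show that no prime divides the index. Write $D=f(c+2)f(c-2)$, so that $\disc(F)=D\,\disc(f)^2$. Since $D$ is squarefree, any prime $p$ with $p^2\mid\disc(F)$ must divide $\disc(f)$. It therefore suffices to prove $p\nmid[\mathcal{O}_K:\mathbb{Z}[\beta]]$ for each prime $p\mid\disc(f)$.

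\textbf{Tower structure.} Set $\alpha=\beta+\beta^{-1}+c$. Since $F(\beta)=0$ and $\beta\neq 0$ (as $F(0)=1$), we get $f(\alpha)=0$. Thus $L=\mathbb{Q}(\alpha)\subseteq K$ with $[K:L]=2$, and $\beta$ is a root of
\[
g(y)=y^2-(\alpha-c)y+1\in\mathbb{Z}[\alpha][y].
\]
Because $f$ is monogenic, $\mathcal{O}_L=\mathbb{Z}[\alpha]$. Moreover $\alpha\in\mathbb{Z}[\beta]$, since $\beta^{-1}=(\alpha-c)-\beta$ is integral and lies in $\mathbb{Z}[\beta]$; indeed $F$ is palindromic, so $\beta^{-1}$ is a polynomial in $\beta$. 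It follows that $\mathbb{Z}[\beta]=\mathcal{O}_L[\beta]=\mathcal{O}_L\oplus\mathcal{O}_L\beta$.

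\textbf{Relative discriminant.} The discriminant of $g$ is $\delta=(\alpha-c)^2-4=(\alpha-c-2)(\alpha-c+2)$. Its norm is
\[
N_{L/\mathbb{Q}}(\delta)=\pm f(c+2)f(c-2)=\pm D,
\]
since $N(\alpha-a)=\pm f(a)$. By the tower formula,
\[
\disc(\mathcal{O}_L[\beta]/\mathbb{Z})=\disc(\mathcal{O}_L)^{2}\,N_{L/\mathbb{Q}}(\delta),
\]
which recovers Theorem~\ref{thm:disc}. On the other hand,
\[
\disc(\mathcal{O}_K)=\disc(\mathcal{O}_L)^2\,N_{L/\mathbb{Q}}(\mathfrak{d}_{K/L}).
\]
Comparing the two, the index satisfies
\[
[\mathcal{O}_K:\mathbb{Z}[\beta]]^2=\frac{N(\delta)}{N(\mathfrak{d}_{K/L})}=N\bigl([\mathcal{O}_K:\mathcal{O}_L[\beta]]_{\mathcal{O}_L}\bigr)^2,
\]
where the relative index ideal $\mathfrak{c}$ satisfies $\delta\mathcal{O}_L=\mathfrak{c}^2\mathfrak{d}_{K/L}$.

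\textbf{Concluding.} Hence $N(\mathfrak{c})^2$ divides $N(\delta)=\pm D$. Since $D$ is squarefree, this forces $N(\mathfrak{c})=1$, so the index is $1$ and $\mathcal{O}_K=\mathbb{Z}[\beta]$. This argument in fact handles all primes at once, so the reduction to $p\mid\disc(f)$ is unnecessary.

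\textbf{Main obstacle.} The key point is justifying the relative index relation $\delta\mathcal{O}_L=\mathfrak{c}^2\mathfrak{d}_{K/L}$ for the free $\mathcal{O}_L$-order $\mathcal{O}_L[\beta]$ of rank $2$. This is standard, since the discriminant of a free order equals the square of the index ideal times the discriminant of the maximal order. It holds locally at each prime of $\mathcal{O}_L$, where $\mathcal{O}_{K,\mathfrak{p}}$ is free over the DVR $\mathcal{O}_{L,\mathfrak{p}}$.

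If one prefers to avoid relative discriminants, the fallback is Dedekind's criterion applied directly to $F$ at primes $p$ with $p^2\mid\disc(F)$. That route is messier and would require handling $p\mid\disc(f)$ through the factorization of $f$ mod $p$. I would therefore present the tower argument.
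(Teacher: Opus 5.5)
Your argument is correct and is essentially the paper's. Both proofs combine $\disc(F)=f(c+2)f(c-2)\disc(f)^2$, $\disc(f)=\disc(\mathbb{Q}(\alpha))$, and the tower relation $\disc(\mathbb{Q}(\alpha))^2\mid\disc(\mathbb{Q}(\beta))$ to show that $[\mathcal{O}_K:\mathbb{Z}[\beta]]^2$ divides the squarefree integer $f(c+2)f(c-2)$. The relative index ideal $\mathfrak{c}$ that you flag as the main obstacle is not needed: comparing your two tower formulas already gives $[\mathcal{O}_K:\mathbb{Z}[\beta]]^2\cdot N(\mathfrak{d}_{K/L})=\pm N(\delta)$, which is exactly the paper's step with $k=\pm N(\mathfrak{d}_{K/L})$.
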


The following result shows that non-monogenicity is also preserved under this transformation.

\begin{thm}
\label{thm:nonmonogenic}
    Let $f(x)\in\mathbb{Z}[x]$ be monic and irreducible. Let $c\in\mathbb{Z}$ and define
$F(x)=x^n\,f\!\left(x+\frac{1}{x}+c\right).$ Let $\alpha$ be a root of $f(x)$ and $\beta$ be a root of $F(x)$. If $F$ is irreducible and $\mathbb{Z}[\alpha]\neq\mathcal{O}_{\mathbb{Q}(\alpha)}$, then $\mathbb{Z}[\beta]\neq\mathcal{O}_{\mathbb{Q}(\beta)}$.
\end{thm}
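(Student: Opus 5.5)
Let $\gamma=\beta+\beta^{-1}+c$. Since $F(\beta)=0$ and $\beta\neq 0$ (the constant term of $F$ is the leading coefficient of $f$, which is $1$), we have $f(\gamma)=0$. Because $f$ is irreducible, $\mathbb{Q}(\gamma)\cong\mathbb{Q}(\alpha)$, so we may take $\alpha=\gamma\in\mathbb{Q}(\beta)=:L$ and set $K=\mathbb{Q}(\alpha)$. Then $[L:K]=2$ by degrees, since $F$ is irreducible of degree $2n$.

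The argument is a contrapositive via index divisibility, using the discriminant identity $\disc(F)=f(c+2)f(c-2)\disc(f)^2$ of Theorem~\ref{thm:disc}. I would avoid that identity at the integral level and instead use the tower directly. The key observation is that $\mathbb{Z}[\beta]$ is a free $\mathbb{Z}[\alpha]$-module with basis $1,\beta$. Indeed $\beta$ satisfies the monic quadratic $\beta^2-(\alpha-c)\beta+1=0$ over $\mathbb{Z}[\alpha]$. Hence $\mathbb{Z}[\alpha][\beta]=\mathbb{Z}[\alpha]+\mathbb{Z}[\alpha]\beta$, and this contains $\mathbb{Z}[\beta]$. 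Conversely $\alpha=\beta+\beta^{-1}+c$, and $\beta^{-1}=(\alpha-c)-\beta\in\mathbb{Z}[\beta]$ because $\beta$ is a unit in $\mathbb{Z}[\beta]$ (its constant term is $\pm1$). So $\mathbb{Z}[\alpha]\subseteq\mathbb{Z}[\beta]$, and the two rings are equal.

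Now suppose $\mathbb{Z}[\beta]=\mathcal{O}_L$; we derive $\mathbb{Z}[\alpha]=\mathcal{O}_K$. Take $x\in\mathcal{O}_K$. Then $x\in\mathcal{O}_L=\mathbb{Z}[\alpha]\oplus\mathbb{Z}[\alpha]\beta$, so $x=a+b\beta$ with $a,b\in\mathbb{Z}[\alpha]$. Since $1,\beta$ is a $K$-basis of $L$ (as $\beta\notin K$, because $[L:K]=2$) and $x\in K$, uniqueness of coordinates forces $b=0$. Thus $x=a\in\mathbb{Z}[\alpha]$, so $\mathcal{O}_K=\mathbb{Z}[\alpha]$, contradicting the hypothesis.

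There is no real obstacle here. The only points needing care are that $\beta\notin K$, which uses the irreducibility of $F$, and that the identification of $\alpha$ with $\beta+\beta^{-1}+c$ is harmless. The latter holds because the property $\mathbb{Z}[\alpha]\neq\mathcal{O}_{\mathbb{Q}(\alpha)}$ does not depend on which root of $f$ is chosen: conjugate embeddings carry $\mathbb{Z}[\alpha]$ to $\mathbb{Z}[\alpha']$ and $\mathcal{O}$ to $\mathcal{O}$. An alternative route uses indices, $[\mathcal{O}_L:\mathbb{Z}[\beta]]^2=\disc(F)/d_L$ together with $d_L=d_K^2N(\mathfrak d_{L/K})$, and shows that $[\mathcal{O}_K:\mathbb{Z}[\alpha]]^2$ divides the index. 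The module argument above is cleaner, so I would use it.
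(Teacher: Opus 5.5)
Your proof is correct, and it takes a genuinely different route from the paper's.

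The paper argues with discriminants. Write $I_\alpha=[\mathcal{O}_K:\mathbb{Z}[\alpha]]$ and assume $\mathcal{O}_L=\mathbb{Z}[\beta]$. The paper combines Theorem~\ref{thm:disc} with $\disc(f)=I_\alpha^2\disc(K)$ to get $\disc(L)=I_\alpha^4\disc(K)^2f(c+2)f(c-2)$. It then applies the tower formula (Proposition~\ref{tower}). Since $\mathcal{O}_L=\mathcal{O}_K[\beta]$, we have $\disc(L/K)=(\alpha-c)^2-4$, whose norm is $f(c+2)f(c-2)$, so $\disc(L)=f(c+2)f(c-2)\disc(K)^2$. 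Comparing the two expressions forces $I_\alpha^4=1$.

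You instead prove the structural fact $\mathbb{Z}[\beta]=\mathbb{Z}[\alpha]\oplus\mathbb{Z}[\alpha]\beta$. The key input is that $\beta$ is a unit of $\mathbb{Z}[\beta]$, so $\alpha\in\mathbb{Z}[\beta]$. From this you get $\mathbb{Z}[\beta]\cap K=\mathbb{Z}[\alpha]$, and you intersect $\mathcal{O}_L=\mathbb{Z}[\beta]$ with $K$ using $\mathcal{O}_L\cap K=\mathcal{O}_K$.

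Your argument is more elementary in several ways:
\begin{enumerate}
\item It needs neither the discriminant identity nor relative discriminants and the tower formula.
\item It does not need the nonvanishing of $f(c+2)f(c-2)$, which the paper uses tacitly when cancelling.
\item It handles explicitly the independence from the choice of roots, which the paper passes over by silently taking $\alpha=\beta+\beta^{-1}+c$.
\end{enumerate}

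It also upgrades for free to a quantitative statement. Since likewise $\mathcal{O}_K[\beta]=\mathcal{O}_K\oplus\mathcal{O}_K\beta$, you get $[\mathcal{O}_L:\mathbb{Z}[\beta]]=[\mathcal{O}_K:\mathbb{Z}[\alpha]]^2\,[\mathcal{O}_L:\mathcal{O}_K[\beta]]$.

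The paper's route mainly buys uniformity: it reuses the discriminant machinery already set up for Theorem~\ref{thm:monogenic}. Along the way it also exhibits the relative discriminant $\disc(L/K)=(\alpha-c)^2-4$ in the monogenic case.
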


\begin{rem}
Assuming the ABC conjecture, iterating the transform starting from a suitable base polynomial produces infinitely many monogenic number fields of degree $2^k n$ for each $k$, as discussed in Section~\ref{sec:open}.
\end{rem}

When $c=0$, our transform is the reciprocal substitution $x \mapsto x + x^{-1}$ and in that case many of the tools we use are well-studied in the literature. Jones~\cite{jones2021infinite} constructed infinite families of palindromic monogenic polynomials; following this line of work, Barman, Narode, and Wagh~\cite{BNW26} treated the monogenicity of palindromic polynomials in general. At $c=0$ our discriminant identity, monogenicity criterion, and non-monogenicity result reduce to Lemma~2.1, Theorem~1.5, and Proposition~1.8 of~\cite{BNW26}, respectively. Our work relates to these ideas in two ways. First, we use an arbitrary $c$ in the transform $x + x^{-1} +c$, which lets us work from a simpler base polynomial and gives us a free parameter to make $f(c+2)f(c-2)$ squarefree. Second, we iterate the transform itself on a monic irreducible base. Iterating the transform can then give monogenic fields of degrees $n, 2n, 4n, \dots, 2^k n$. Concretely, we reach a monogenic field of degree $512$ from the base $x^4 + x + 1$ (see Section~\ref{sec:example}).

In the rest of this paper, we first discuss general properties of palindromic polynomials and of the transformation $T_c$ (Section~\ref{sec:transformation}). We then determine when $F(x)$ is irreducible, by constructing a reverse transformation (Section~\ref{sec:irreducibility}). Next we examine when $F(x)$ is monogenic (Section~\ref{sec:monogenicity}) by finding a formula for the discriminant of $F(x)$ and using the tower formula to obtain factors of the discriminant of the number field generated by $F(x)$. Finally, we work through an example of applying our criteria to a polynomial that has undergone the transformation several times (Section~\ref{sec:example}).

\subsection*{Acknowledgments}
This research was supported by the College of Computational, Mathematical, and Physical Sciences at BYU. The authors thank Nick Andersen and Kyle Pratt for their helpful comments.

\section{Basic Properties of the Palindromic Transform}
\label{sec:transformation}

In this section, we first discuss properties of palindromic polynomials, then the properties of the transformation $T_c$, and define the notation we use in the paper.

\begin{lem}\label{product of palindromic polynomials is palindromic}
    The product of two palindromic polynomials is palindromic.
\end{lem}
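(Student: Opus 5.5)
The plan is to apply the definition directly and use that degrees add under multiplication. Let $g$ and $h$ be palindromic of degrees $m$ and $k$, so $x^m g(\tfrac1x)=g(x)$ and $x^k h(\tfrac1x)=h(x)$. Over $\mathbb{Z}$ (or any integral domain, since the paper works in $\mathbb{Z}[x]$), the product $gh$ has degree exactly $m+k$, because the leading coefficients multiply to a nonzero leading coefficient. That is the one point requiring care: the definition of palindromic refers to the degree of the polynomial, so we must know that $\deg(gh)=m+k$ before applying it to $gh$. Over a ring with zero divisors this could fail, but here there is no issue.

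With that in hand, the computation is a single chain of equalities in the ring of Laurent polynomials (or after clearing denominators):
\[
x^{m+k}(gh)\!\left(\tfrac1x\right)=\left(x^m g\!\left(\tfrac1x\right)\right)\left(x^k h\!\left(\tfrac1x\right)\right)=g(x)h(x).
\]
Since $m+k=\deg(gh)$, this is exactly the palindromic condition for $gh$.

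I do not expect any genuine obstacle. The only step to state explicitly is the degree additivity above. Optionally, one could add the remark that the zero polynomial and degree conventions are excluded, since palindromic polynomials here are nonzero.
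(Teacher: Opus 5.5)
Your proof is correct and follows the paper's argument: both multiply the identities $x^m g(\tfrac1x)=g(x)$ and $x^k h(\tfrac1x)=h(x)$ and use $\deg(gh)=m+k$ to read off the palindromic condition for the product. You justify the degree additivity explicitly (no zero divisors in $\mathbb{Z}$), which the paper only notes in passing.
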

\begin{proof}
    Assume that $f(x)$ and $g(x)$ are palindromic, with $n$ the degree of $f(x)$, and $m$ the degree of $g(x)$. So, $f(x) = x^nf(\tfrac 1 x)$ and $g(x) = x^mg(\tfrac 1 x)$. Let $h(x) = f(x)g(x)$. Note that the degree of $h$ is $n+m$. We find $h(x) = f(x) g(x) = x^nf(\tfrac1 x) x^mg(\tfrac 1 x) = x^{n+m}f(\tfrac 1 x)g(\tfrac 1 x) = x^{n+m}h(\tfrac 1 x)$. Hence $h$ is palindromic of degree $n+m$.
\end{proof}

\begin{lem}\label{Odd-degree palindromic polynomials contain factor (x+1)}
    Any odd degree palindromic polynomial contains the factor $(x+1)$.
\end{lem}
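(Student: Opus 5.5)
The plan is to show that $-1$ is a root by evaluating the palindromic identity at $x=-1$. Let $f$ be palindromic of odd degree $n$, so that $f(x) = x^n f(\tfrac1x)$ holds as polynomials. Substituting $x=-1$ gives $f(-1) = (-1)^n f(-1) = -f(-1)$, because $n$ is odd. Hence $2f(-1)=0$, so $f(-1)=0$. By the factor theorem, $(x+1)$ divides $f(x)$.

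If one prefers to work over $\mathbb{Z}[x]$ rather than a field, note that $x+1$ is monic. Division with remainder by $x+1$ therefore stays in $\mathbb{Z}[x]$, and the remainder is the constant $f(-1)=0$.

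As a sanity check one can also argue via coefficients. Writing $f(x)=\sum_{i=0}^n a_i x^i$, palindromicity means $a_i=a_{n-i}$. In $f(-1)=\sum_i (-1)^i a_i$, the terms for $i$ and $n-i$ have opposite parity because $n$ is odd, so they cancel in pairs. No index is paired with itself, since $i=n-i$ would force $n$ to be even.

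There is no real obstacle here. The only point needing care is that the identity $f(x)=x^nf(1/x)$ is an identity of rational functions that may legitimately be evaluated at $x=-1$, which is clear since $-1\neq 0$.
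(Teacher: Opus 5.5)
Your proof is correct and essentially matches the paper. Evaluating $f(x)=x^nf(\tfrac1x)$ at $x=-1$ is a compact form of the paper's argument, which writes $p(x)=\sum_{i=0}^k a_i(x^i+x^{2k+1-i})$ and notes that each pair cancels at $x=-1$, which is exactly your coefficient ``sanity check''; your remark that dividing by the monic $x+1$ stays in $\mathbb{Z}[x]$ makes explicit a point the paper leaves implicit.
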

\begin{proof}
    Let $p(x)$ be a palindromic polynomial of odd degree $2k+1$. Because it has odd degree, the number of terms in total is even. We write it as \[
    p(x)= \sum_{i=0}^k a_i(x^i + x^{2k+1 - i}). \]We note that $i$ and $2k+1 - i$ will always have opposite parity. We see \[p(-1) = \sum_{i=0}^ka_i(-1 + 1) = 0\]so $(x+1)$ is a divisor of $p(x)$.
\end{proof}

\begin{defn}
Let $c$ be an integer. Define the transformation $T$ by $T(f) = x^n f(x + \frac{1}{x} + c),$ where $f(x)$ is a polynomial. When the value of $c$ is not clear from context, we write $T_c$ to indicate the dependence on $c$. We use the notation $T_c^2(f) = T_c(T_c(f))$, and similarly for higher powers.
\end{defn}

\begin{lem}
Fix $c \in \mathbb Z$. Let $f\in\mathbb{Z}[x]$ have degree $n$. Define $F_c(x)=T_c(f)$. Then $F_c$ is a palindromic polynomial of degree $2n$.
\end{lem}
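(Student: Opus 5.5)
We verify two things for $F_c(x)=x^n f(x+\tfrac1x+c)$: that it is a polynomial of exact degree $2n$, and that it is palindromic in the sense of the Definition, i.e.\ $x^{2n}F_c(\tfrac1x)=F_c(x)$.

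\textbf{Polynomiality and degree.} Write $f(x)=\sum_{k=0}^n a_k x^k$ with $a_n\neq 0$. Then
\[
F_c(x)=\sum_{k=0}^n a_k\, x^{n-k}\,(x^2+cx+1)^k ,
\]
since $x^k(x+\tfrac1x+c)^k=(x^2+cx+1)^k$. Each summand is a polynomial with integer coefficients of degree $(n-k)+2k=n+k\le 2n$. Equality $n+k=2n$ holds only for $k=n$, and that term contributes leading coefficient $a_n$. Hence $F_c\in\mathbb{Z}[x]$ has degree exactly $2n$, with leading coefficient $a_n$.

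The constant term of the $k$-th summand is nonzero only when $n-k=0$, so the constant term of $F_c$ is also $a_n\neq 0$. This matters for palindromicity: the reversal $x^{2n}F_c(\tfrac1x)$ then again has degree $2n$, so comparing it with $F_c$ is meaningful.

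\textbf{Palindromicity.} The key observation is that $u(x)=x+\tfrac1x+c$ satisfies $u(\tfrac1x)=u(x)$. Therefore
\[
x^{2n}F_c\!\left(\tfrac1x\right)=x^{2n}\,x^{-n} f\!\left(\tfrac1x+x+c\right)=x^n f\!\left(x+\tfrac1x+c\right)=F_c(x),
\]
as an identity of rational functions, and hence of polynomials.

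There is no real obstacle. The only point needing care is that degree exactly $2n$ requires $a_n\neq 0$, which holds because $f$ has degree $n$.
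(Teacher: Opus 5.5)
Your proof is correct and follows essentially the same route as the paper. The paper also counts degree from the extreme terms $x^{\pm n}$ of $f(x+\tfrac1x+c)$ and proves palindromicity from $x^{2n}F_c(\tfrac1x)=x^n f(x+\tfrac1x+c)=F_c(x)$. Your expansion $F_c(x)=\sum_k a_k x^{n-k}(x^2+cx+1)^k$ simply makes the integrality, the polynomiality and the exact degree (leading coefficient $a_n\neq 0$) more explicit than the paper's one-line version.
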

\begin{proof}
    Since $f$ has degree $n$, substituting $x+\frac{1}{x}+c$ produces terms of highest degree $x^n$ and $x^{-n}$, and multiplying by $x^n$ yields a polynomial of degree $2n$. Also, $x^{2n}F_c(\tfrac 1 x)=x^n f\!\left(x+\frac{1}{x}+c\right)=F_c(x),$ so $F_c$ is palindromic.
\end{proof}

\begin{lem}\label{Tc Injective}
    Fix $c \in \mathbb{Z}$. Then $T_c$ is injective from $\mathbb{Z}[x]$ to $\mathbb{Z}[x]$.
\end{lem}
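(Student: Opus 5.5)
We need to show that $T_c(f)=T_c(g)$ forces $f=g$. One subtlety comes first. As defined, $T_c(f)=x^n f(x+\frac1x+c)$ with $n=\deg f$, so the power of $x$ depends on the input. The cleanest route is to show that the degree is recovered from the output, and then cancel.

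\textbf{Recovering the degree.} By the preceding lemma, if $f\neq 0$ has degree $n$, then $T_c(f)$ has degree exactly $2n$. The leading coefficient of $f$ becomes the leading coefficient of $T_c(f)$, since the top-degree term $x^n\cdot a_n(x+\frac1x+c)^n$ contributes $a_nx^{2n}$ and nothing else reaches degree $2n$. So if $T_c(f)=T_c(g)$ with both nonzero, then $2\deg f=2\deg g$, hence $\deg f=\deg g=n$. The degenerate case of the zero polynomial (and constants, where $T_c(a)=a$) is handled separately. If $T_c(f)=0$, then $f(x+\frac1x+c)=0$ as a Laurent polynomial, which forces $f=0$ by the argument below.

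\textbf{Cancelling.} With a common $n$, the equality $x^n f(x+\frac1x+c)=x^n g(x+\frac1x+c)$ in $\mathbb{Z}[x]$ (or in $\mathbb{Q}(x)$) lets us cancel $x^n$, so $h:=f-g$ satisfies $h(x+\frac1x+c)=0$ in $\mathbb{Q}(x)$. The element $t=x+\frac1x+c$ is transcendental over $\mathbb{Q}$. Indeed, if $h\neq0$ had degree $m\ge 0$ with leading coefficient $b_m$, then $x^m h(t)$ would be a polynomial with leading term $b_mx^{2m}\neq0$, exactly as in the previous step. Hence $h=0$, i.e.\ $f=g$. Alternatively, one can specialize: $x\mapsto x+\frac1x+c$ takes infinitely many real values as $x$ ranges over $(1,\infty)$, since it is strictly increasing there. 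So $h$ has infinitely many roots and is zero.

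\textbf{Main obstacle.} The only delicate point is the degree-dependence of the normalizing factor $x^n$. Without first matching degrees, one cannot directly cancel. That is why the leading-term computation showing $\deg T_c(f)=2\deg f$ and $\operatorname{lc}(T_c(f))=\operatorname{lc}(f)$ is done first. After that, injectivity reduces to the transcendence of $x+\frac1x+c$, which is immediate.
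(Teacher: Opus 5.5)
Your proof is correct and follows the paper's route: match degrees using $\deg T_c(f)=2\deg f$, cancel $x^n$, and show that $h=f-g$ vanishes. The paper does that last step exactly as in your ``alternatively'' remark: $x+\frac1x+c$ sweeps out $(-\infty,c-2]\cup[c+2,\infty)$, so $h$ has infinitely many real roots. Your primary leading-term argument (that $x^m h(x+\frac1x+c)$ has leading term $b_m x^{2m}\neq 0$) is an algebraic version of the same step, and your explicit treatment of the zero polynomial covers a case the paper leaves implicit.
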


\begin{proof}
Let $f_1, f_2 \in \mathbb{Z}[x]$.
    Assume $T_c(f_1) = T_c(f_2)$. Because $T_c$ always results in a polynomial with double the degree of the original polynomial, we see the degrees of $f_1$ and $f_2$ are equal. Let $n$ be this degree. So, 
    \begin{align*}
        x^nf_1(x + \tfrac 1 x +c) =& \ x^nf_2(x + \tfrac 1 x +c) \\
        f_1(x+\tfrac 1 x +c) =& \ f_2(x + \tfrac 1 x +c).
    \end{align*}
        Let $a \in (-\infty, c-2] \cup [c+2,\infty)$. Then we can find an $x$ such that $a = x + \tfrac 1 x + c$. So, $f_1(a) = f_2(a)$ along that interval. Now, set $R = f_1-f_2$. We know that for all $a \in (-\infty, c-2] \cup [c+2,\infty)$, $R(a) = 0$. Since $R$ has infinitely many zeros, it must be the zero polynomial. So, $f_1 = f_2$.
\end{proof}

Our next result shows that $T_c$ is a multiplicative homomorphism.
\begin{lem}\label{Tf1f2) = T(f1)T(f2)}
    Fix $c \in \mathbb{Z}$. Then, $T_c(f_1)T_c(f_2) = T_c(f_1 f_2)$.
\end{lem}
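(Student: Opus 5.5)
The plan is to compute directly from the definition of $T_c$, using the fact that the degree of a product is the sum of the degrees. Let $f_1$ have degree $n$ and $f_2$ have degree $m$. Over $\mathbb{Z}$, which is an integral domain, the product $f_1f_2$ has degree exactly $n+m$: the leading coefficient of $f_1f_2$ is the product of the two nonzero leading coefficients, so it is nonzero. By the definition of $T_c$ we then have
\[
T_c(f_1f_2)(x) = x^{n+m}\,(f_1f_2)\!\left(x+\tfrac1x+c\right).
\]

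Next, evaluating a product of polynomials at a point is multiplicative. This holds in the field $\mathbb{Q}(x)$ of rational functions, so
\[
(f_1f_2)\!\left(x+\tfrac1x+c\right)=f_1\!\left(x+\tfrac1x+c\right)f_2\!\left(x+\tfrac1x+c\right).
\]
Splitting $x^{n+m}=x^n\cdot x^m$ and regrouping the factors gives
\[
T_c(f_1f_2)(x) = \Bigl(x^n f_1\!\left(x+\tfrac1x+c\right)\Bigr)\Bigl(x^m f_2\!\left(x+\tfrac1x+c\right)\Bigr)=T_c(f_1)(x)\,T_c(f_2)(x),
\]
which is the claimed identity.

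The only subtle point is the bookkeeping of degrees. The exponent in $T_c$ depends on the degree of its input, so we must confirm that $\deg(f_1f_2)=n+m$; the integral-domain argument above does this. One degenerate case should be mentioned: if $f_1$ or $f_2$ is the zero polynomial, both sides are zero under any convention. Otherwise the argument is a routine manipulation of rational functions, and no earlier lemma is needed beyond the definition of $T_c$.
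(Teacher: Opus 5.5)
Your proposal is correct and is essentially the paper's proof: both use $\deg(f_1f_2)=n_1+n_2$, split $x^{n_1+n_2}=x^{n_1}x^{n_2}$, and regroup after evaluating at $x+\tfrac1x+c$. You additionally justify why the degree is additive (over the integral domain $\mathbb{Z}$ the leading coefficients multiply to a nonzero number) and handle the zero-polynomial case, both of which the paper leaves implicit.
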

\begin{proof}
    Let $n_i$ be the degree of $f_i$. Set $f = f_1f_2$ and $n = n_1+n_2$. Note that $f$ has degree $n$. We have
  \[
  x^{n_1}f_1(\tfrac 1 x + x +c)\ x^{n_2}f_2(\tfrac 1 x +x+c) 
  = x^nf(\tfrac 1 x + x +c). \qedhere
  \] 
\end{proof}

\begin{rem}
    For the rest of the paper, we define $F = F_c(x) = T_c(f)$ for some fixed $c \in \mathbb{Z}$. Although $F(x)$ depends on the choice of $c$, we omit it from our notation for simplicity.
\end{rem}

\begin{lem}\label{non-zero roots}
    Fix $c \in \mathbb{Z}$. The polynomial $F(x)$ does not have zero as a root.
\end{lem}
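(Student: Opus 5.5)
The plan is to compute the constant term of $F(x)$ directly and show that it equals the leading coefficient of $f$, which is nonzero. Write $f(y)=\sum_{i=0}^n a_i y^i$ with $a_n\neq 0$. Then
\[
F(x)=x^n f\!\left(x+\tfrac1x+c\right)=\sum_{i=0}^n a_i\, x^{n-i}\,(x^2+cx+1)^i ,
\]
since $x^i\bigl(x+\tfrac1x+c\bigr)^i=(x^2+cx+1)^i$. Evaluating at $x=0$, the summand with index $i<n$ carries the factor $x^{n-i}$ with $n-i\ge 1$, so it vanishes at $0$. The summand with $i=n$ is $a_n(x^2+cx+1)^n$, which takes the value $a_n\cdot 1^n=a_n$ at $x=0$. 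Hence $F(0)=a_n\neq 0$, so $0$ is not a root of $F$.

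Alternatively, the same conclusion follows from palindromicity, using the earlier lemma that $F$ is palindromic of degree $2n$. Palindromicity means the constant term of $F$ equals its leading coefficient. The $x^{2n}$ coefficient comes only from $a_n x^n (x+\tfrac1x)^n$, so it is again $a_n\neq 0$.

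There is no real obstacle. The only point to handle carefully is the convention that $f$ has exact degree $n$ with $a_n\ne0$, and in the monic case of interest $F(0)=1$. I would also note the degenerate case $n=0$: then $F=f$ is a nonzero constant and has no roots at all.
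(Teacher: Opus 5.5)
Your proof is correct and follows the paper's argument: both identify the constant term $F(0)$ with the leading coefficient $a_n\neq 0$ of $f$. Your expansion $F(x)=\sum_{i=0}^n a_i x^{n-i}(x^2+cx+1)^i$ supplies the justification for that identification, which the paper simply asserts.
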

\begin{proof}
Write $f(x) = a_nx^n + a_{n-1}x^{n-1} + \cdots + a_1x + a_0$, and $F(x) = b_{2n}x^{2n} + b_{2n-1}x^{2n-1} + \cdots + b_1x + b_0$. 
    The constant term of $F$ is equal to the leading coefficient of $f$, so $b_0 = a_n$. The leading coefficient of $f$ cannot be zero, so $F(0) = b_0 = a_n \neq 0$, so 0 is not a root.
\end{proof}

\section{Irreducibility}
\label{sec:irreducibility}
To ensure that $\mathbb{Q}[x]/(F(x))$ is a number field, the transformed polynomial $F(x)$ must be irreducible over $\mathbb{Q}$. One might expect that the irreducibility of $f(x)$ over $\mathbb{Q}$ would also imply the irreducibility of $F(x)$. However, that is not the case in general, as shown by the following example.

\begin{ex}
    Let $c = 2$ and $f(x) = x^3 -x +4$. Define $F(x) = T_c(f)$. Then,
    \[F(x) = (x^3 + 2x^2 +4x + 1)(x^3 + 4x^2 + 2x  + 1).\]
    One can easily check that $f(x)$ is irreducible, but $F(x)$ is not. 
\end{ex}

The strategy for determining when $F(x)$ is irreducible is as follows. Since $F(x)$ is palindromic, we first show that any factorization of $F(x)$ must either be a product of palindromic factors or a product $g(x)\cdot x^mg(\tfrac 1 x)$ for some non-palindromic polynomial $g$. We then rule out each type in turn. The palindromic case is handled by constructing a reverse transformation, which shows that palindromic factors of $F$ would give factors of $f$, contradicting irreducibility. The $g(x) \cdot x^mg(\tfrac 1 x)$ case is ruled out by simple criteria involving only $F(1)$ and $F(-1)$.

We first prove that $F$ can only be irreducible if $f$ is irreducible. 

\begin{lem}
If $f(x)$ is reducible, then $F(x)$ is reducible. 
\end{lem}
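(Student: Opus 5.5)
The plan is to use the multiplicativity of $T_c$ (Lemma~\ref{Tf1f2) = T(f1)T(f2)}) directly. Suppose $f$ is reducible over $\mathbb{Q}$. I will write $f = f_1 f_2$ with $f_1,f_2\in\mathbb{Z}[x]$ of degrees $n_1,n_2\ge 1$ and $n_1+n_2=n$. Gauss's lemma lets me take the factors in $\mathbb{Z}[x]$ when $f\in\mathbb{Z}[x]$, and for reducibility over $\mathbb{Q}$ rational factors would suffice anyway.

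Next I apply Lemma~\ref{Tf1f2) = T(f1)T(f2)} to get
\[
F = T_c(f) = T_c(f_1)\,T_c(f_2).
\]
By the lemma that $T_c(g)$ is a polynomial of degree $2\deg g$, the factor $T_c(f_i)$ has degree $2n_i\ge 2$. Both factors are therefore non-constant polynomials with integer (or rational) coefficients, and their degrees $2n_1$ and $2n_2$ sum to $2n$. This exhibits a nontrivial factorization of $F$, so $F$ is reducible.

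The only point needing care is the degree claim: the leading coefficient of $T_c(f_i)$ is the leading coefficient of $f_i$, which is nonzero. This is the same observation used in Lemma~\ref{non-zero roots} for the constant term, and by palindromicity the constant and leading coefficients agree. So neither factor degenerates to a constant, and I expect no real obstacle.

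One subtlety worth stating is the case where $f$ is reducible only because it has a nonunit constant factor, such as $f = 2g$. If reducibility is meant over $\mathbb{Z}$, then $F = 2\,T_c(g)$, which is again reducible. If it is meant over $\mathbb{Q}$, this case does not arise. I will state the argument for a factorization into non-constant factors, which is the case relevant to the paper.
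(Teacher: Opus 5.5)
Your proposal is correct and follows the paper's proof: the paper writes $f = f_1 f_2$ and applies the multiplicativity of $T_c$ to get $F = T_c(f_1)\,T_c(f_2)$. Your extra check that each $T_c(f_i)$ has degree $2n_i \ge 2$, so the factorization is nontrivial, supplies a detail the paper leaves implicit.
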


\begin{proof}
    By Lemma~\ref{Tf1f2) = T(f1)T(f2)}, if $f = f_1f_2$, then $F = T_c(f_1)T_c(f_2)$.
\end{proof}

When $f(x)$ is reducible, then one factorization of $F(x)$ reflects applying the transformation individually to each factor of $f(x)$. However, these may not be the only factors, as $T_c(f_i)$ may be reducible. If $f(x)$ is irreducible, the factors of $F(x)$ follow a different pattern, as seen in the above example and discussed below.

\begin{defn}
    For a polynomial $g$ with degree $m$, define $g^{rev}$ as $g^{rev}(x) = x^mg(\tfrac 1 x)$.
\end{defn}

\begin{ex}
    Given the polynomial $g(x) = 2x^3 + 7x^2 + 9$, $g^{rev}(x) = 9x^3 + 7x + 2$.
\end{ex}

\begin{lem}
\label{grev}
    Given a palindromic polynomial $p(x)$, with degree $k$, if $p$ has some factor $g$, it also has the factor $g^{rev}$. Additionally, $g \cdot g^{rev}$ is palindromic.
\end{lem}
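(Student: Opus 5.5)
The plan is to work directly from the defining identity $p(x)=x^k p(\tfrac1x)$ and to treat the two assertions separately.

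For the first assertion, suppose $p = g h$ with $g$ of degree $m$ and $h$ of degree $k-m$. Substituting $\tfrac1x$ and multiplying by $x^k = x^m x^{k-m}$ gives
\[
p(x) = x^k p(\tfrac1x) = \bigl(x^m g(\tfrac1x)\bigr)\bigl(x^{k-m}h(\tfrac1x)\bigr) = g^{rev}(x)\,h^{rev}(x).
\]
Hence $g^{rev}$ divides $p$. This is the same manipulation used in Lemma~\ref{product of palindromic polynomials is palindromic}.

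One subtlety needs checking: $g^{rev}$ as defined has degree $m$ only when $g(0)\neq 0$. This causes no trouble, because $x^m g(\tfrac1x)$ is still a polynomial and still divides $p$. If one wants $g^{rev}$ to have the same degree as $g$, observe that a palindromic $p$ has constant term equal to its leading coefficient, which is nonzero. So $p(0)\neq0$, and therefore $g(0)\neq 0$ for any factor $g$.

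For the second assertion, I would compute directly. The product $g\cdot g^{rev}$ has degree $2m$, and
\[
x^{2m}\,g(\tfrac1x)\,g^{rev}(\tfrac1x) = \bigl(x^m g(\tfrac1x)\bigr)\bigl(x^m \cdot x^{-m} g(x)\bigr) = g^{rev}(x)\,g(x).
\]
The only care needed is the degree bookkeeping: $\deg g^{rev}=m$, using $g(0)\neq0$ as established above. This is the step I expect to be the main (minor) obstacle.

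Alternatively, one can note that $(g^{rev})^{rev}=g$ when $g(0)\neq 0$. Then $(g g^{rev})^{rev} = g^{rev}(g^{rev})^{rev} = g^{rev} g$, using the multiplicativity of $rev$ that was established in the first step.
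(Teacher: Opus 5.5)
Your proof is correct, but for the divisibility claim you take a different route from the paper. The paper works with roots. It first disposes of the case where $g$ is palindromic. Otherwise, each root $\gamma_i$ of $g$ is a root of $p$, palindromicity makes $1/\gamma_i$ a root of $p$ too (with $\gamma_i\neq 0$ justified by citing Lemma~\ref{non-zero roots}), and since $g^{rev}$ has roots $1/\gamma_i$, the paper concludes $g^{rev}\mid p$.

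You instead reverse the factorization itself, $p=p^{rev}=g^{rev}h^{rev}$, which gains several things:
\begin{itemize}
\item It needs no case split.
\item It handles multiplicities automatically. The root argument silently assumes that containing the roots of $g^{rev}$ is enough, which needs extra care when $g$ has repeated roots.
\item It gives an explicit cofactor $h^{rev}$, so the divisibility holds in $\mathbb{Z}[x]$ and not only over $\mathbb{Q}$ or $\mathbb{C}$.
\item It gets $p(0)\neq 0$ directly from palindromicity. The paper's Lemma~\ref{non-zero roots} is formally stated only for polynomials of the form $F=T_c(f)$.
\end{itemize}

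For the second assertion, your computation is the same as the paper's. Your check that $\deg g^{rev}=m$ (via $g(0)\neq 0$) makes explicit a degree condition the paper leaves implicit.
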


\begin{proof}
    If $g(x)$ is a palindromic factor of $p(x)$, then $g = g^{rev}$, and there is nothing to prove.

    Assume that $g(x)$ is a non-palindromic factor of $p(x)$.  Let $\gamma_i$ be the roots of $g(x)$, and let $m$ be the degree of $g(x)$. Then, $\gamma_i$ is also a root of $p(x)$. Because $p(x)$ is palindromic, $\frac{1}{\gamma_i}$ are also roots of $p(x)$, as $0 = p(\gamma_i) = \gamma_i^{k}p(\tfrac{1}{\gamma_i})$. By Lemma \ref{non-zero roots}, $\gamma_i \neq 0$, so $p(\tfrac{1}{\gamma_i}) = 0$. Then, the polynomial $g^{rev}(x) = x^mg(\tfrac{1}{x})$ has roots $\frac{1}{\gamma_i}$. Since $p(x)$ has all the roots of $g^{rev}$, $g^{rev}$ must be a factor of $p(x)$.

    We have \[g(x)\ g^{rev}(x) = g(x) \ x^mg(\tfrac1 x)  = x^{2m} \tfrac{1}{x^m}g(x)\ g(\tfrac1 x) = x^{2m} g(\tfrac 1 x)\ g^{rev}(\tfrac1 x), \] hence $g\cdot g^{rev}$ is palindromic. 
\end{proof}

Note that not every non-palindromic factor $g$ will satisfy $p = g\cdot g^{rev}$, but for any non-palindromic, irreducible factor $g$, we have $\pm gg^{rev}$ is a factor of $p$.

\begin{lem}
    For any non-palindromic, irreducible factor $g$ of a palindromic polynomial $p$, we have $\pm gg^{rev}$ is a factor of $p$.
\end{lem}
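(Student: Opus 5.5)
The plan is to show that $g$ and $g^{rev}$ are coprime divisors of $p$ that are irreducible up to sign, so their product divides $p$ (unique factorization in $\mathbb{Q}[x]$, and by Gauss's lemma also in $\mathbb{Z}[x]$ when $g$ is primitive).

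The first step is to check that $g^{rev}$ divides $p$ and has the same degree $m$ as $g$. Lemma~\ref{grev} gives the divisibility. For the degree, note that $g(0)\neq 0$: otherwise $x\mid g\mid p$, while a palindromic $p$ has $p(0)$ equal to its leading coefficient, which is nonzero. This is the argument of Lemma~\ref{non-zero roots} in general form.

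Next, $g^{rev}$ is irreducible. A factorization $g^{rev}=h_1h_2$ with factors of positive degree would give
\[
g=(g^{rev})^{rev}=h_1^{rev}h_2^{rev}.
\]
Here $(g^{rev})^{rev}=g$ because $g(0)\ne0$, and $h_i(0)\neq 0$ since $h_i \mid g^{rev}$ and $g^{rev}(0)$ is the leading coefficient of $g$. So $\deg h_i^{rev}=\deg h_i$, and $g$ would be reducible. The same reasoning shows that $g^{rev}$ is primitive whenever $g$ is, since the coefficient lists are reversed.

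Then I show that $g$ and $g^{rev}$ are not associates. Suppose $g^{rev}=\lambda g$. Comparing leading and constant coefficients gives $a_0=\lambda a_m$ and $a_m=\lambda a_0$, so $\lambda^2=1$ and $\lambda=\pm1$. The case $\lambda=1$ makes $g$ palindromic, which contradicts the hypothesis.

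The main obstacle is the case $\lambda=-1$, where $g$ is anti-palindromic. Then $g(1)=-g(1)$, so $g(1)=0$, and irreducibility forces $g=\pm(x-1)$. I would handle this as an exceptional case: $g=x-1$ divides $p$, with $g^{rev}=1-x=-g$. Here the conclusion $\pm gg^{rev}=\mp(x-1)^2 \mid p$ needs $(x-1)^2\mid p$. That follows because a palindromic $p$ of degree $k$ with $p(1)=0$ has even multiplicity at $1$. One sees this by noting that $p(x)/(x-1)$ is anti-palindromic of degree $k-1$, hence vanishes at $1$. Alternatively, the statement may intend to exclude this degenerate case, and I would note it explicitly.

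Outside that case, $g$ and $g^{rev}$ are non-associate irreducibles dividing $p$. Unique factorization in $\mathbb{Z}[x]$ then gives $g\,g^{rev}\mid p$, so $\pm g g^{rev}$ is a factor of $p$. The sign $\pm$ only absorbs unit ambiguity, for example if one normalizes $g^{rev}$ to have positive leading coefficient.
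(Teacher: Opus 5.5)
Your proof is correct and follows the same route as the paper: if $g$ and $g^{rev}$ are coprime their product divides $p$, and otherwise $g=\pm(x-1)$, where the anti-palindromic cofactor $p/(x-1)$ vanishes at $1$ and so $(x-1)^2\mid p$. Your version is more careful than the paper's, which only asserts that non-coprimality forces $g=x\pm1$; you justify this by showing that $g^{rev}$ is irreducible of the same degree and that $g^{rev}=\lambda g$ forces $\lambda=\pm1$.
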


\begin{proof}
Since $g \mid p$, we also have $g^{rev} \mid p$. If $g$ and $g^{rev}$ are coprime, then $gg^{rev}$ is a factor of $p$. Thus, it remains to consider the case where $g$ and $g^{rev}$ are not coprime. Since $g$ is irreducible, the only possibilities are $g = x+1$ or $g= x -1$. If $g = x+1$, then $g$ is palindromic. If $g = x-1$, then $g^{rev} = 1 -x$. We must have $p(x) = gh$ for some polynomial $h$, since $g$ is not palindromic. Then, \[p(x) = x^k p(\tfrac 1  x) = x^{k_1} g(\tfrac 1 x) \cdot x ^{k_2}h(\tfrac 1 x),\] where $k$ is the degree of $p$, and $k_1, k_2$ are the degrees of $g, h$ respectively. So $(x-1)h = -(x-1)h^{rev}$, or $h = -h^{rev}$. Then $h(1) = -(1)^{k_2}h(\tfrac 1 1) = -h(1)$, so $h(1) = 0$. We can then factor out $(x-1)$ from $h$ such that there exists a polynomial $q(x)$ that satisfies $p(x) = (x-1)(x-1)q(x)$. So $\pm gg^{rev} | p(x)$.
\end{proof}

We now classify the possible factorizations of a palindromic polynomial.

\begin{lem}
    \label{factorization}
    A reducible palindromic polynomial of even degree factors in one of the following ways:
    \begin{enumerate}
    \item $F = p_1\cdots p_n$ where each $p_i$ is a palindromic polynomial of even degree or
    \item $F = \pm g\cdot g^{rev}$, where $g$ is a non-palindromic polynomial.
    \end{enumerate}
\end{lem}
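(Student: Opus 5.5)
We factor $F$ into irreducibles over $\mathbb{Z}$ and pair each non-palindromic irreducible factor with its reversal. The goal is to show that, after grouping, either every group is a palindromic polynomial of even degree, giving case (1), or the whole factorization can be reorganized as $\pm g\cdot g^{rev}$, giving case (2).

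\textbf{Step 1: sort the irreducible factors.} Write $F = \pm q_1\cdots q_r$ with each $q_i$ irreducible. Each $q_i$ is either palindromic, or (by Lemma~\ref{grev}) its reversal $q_i^{rev}$ also divides $F$. For the non-palindromic factors, we argue by repeated division, using the lemma just proved. If $q$ is a non-palindromic irreducible factor, then $\pm qq^{rev}$ divides $F$. Moreover, $F/(qq^{rev})$ is again palindromic up to sign: $qq^{rev}$ is palindromic by Lemma~\ref{grev}, and a quotient of two palindromic polynomials is palindromic, since $x^{k-2m}(F/(qq^{rev}))(1/x) = F/(qq^{rev})$. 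Inducting on degree, $F = \pm \prod_j (g_jg_j^{rev}) \cdot \prod_i p_i$, where each $p_i$ is palindromic and irreducible.

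\textbf{Step 2: control degrees.} An irreducible palindromic factor of odd degree is divisible by $x+1$ (Lemma~\ref{Odd-degree palindromic polynomials contain factor (x+1)}), so it must equal $\pm(x+1)$. Since $\deg F$ is even and each $g_jg_j^{rev}$ has even degree, the factors $x+1$ occur an even number of times. We pair them into $(x+1)^2$, which is palindromic of even degree.

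\textbf{Step 3: choose the case.} If no non-palindromic factor occurs, every group is palindromic of even degree, which is case (1). Otherwise we must decide between the two cases. If the palindromic pieces could be absorbed, we would set $g = \prod_j g_j$. Then $g^{rev} = \prod_j g_j^{rev}$, since reversal is multiplicative (this follows from $\deg$ being additive), and $F = \pm g g^{rev}\cdot(\text{palindromic part})$. Alternatively, each $g_jg_j^{rev}$ is itself palindromic of even degree, since $2\deg g_j$ is even. In that case all groups are palindromic of even degree, so case (1) holds anyway.

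So the cleanest proof is to take every group ($g_jg_j^{rev}$, the paired $(x+1)^2$, and the even-degree irreducible palindromic $p_i$) as a $p_i$ in case (1). Case (2) is then a specialization recorded for later use, namely when $F$ has no palindromic factor. The sign $\pm$ is absorbed into one factor; $-p$ is still palindromic.

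\textbf{Main obstacle.} The main subtlety is the non-coprime case $g = x-1$. Here the lemma gives $(x-1)^2 = -(x-1)(x-1)^{rev}$, which is palindromic up to sign. Since $x^2-2x+1$ is genuinely palindromic, it fits into case (1). I also need to check that the quotient by $gg^{rev}$ stays palindromic at each stage of the induction, so that the induction goes through.
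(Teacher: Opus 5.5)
Your Steps 1 and 2 are sound, and they are more careful than the paper about multiplicities and the factor $x-1$. The gap is in Step 3.

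\textbf{The gap.} Your assertion that case (1) ``holds anyway'' is true only if a one-factor product $p_1=\pm F$ is allowed. Under that reading the lemma says nothing: every palindromic polynomial of even degree is trivially such a product, which is why your argument never uses that $F$ is reducible. The lemma is needed in a non-trivial form. In the proof of Theorem~\ref{theorem:main-irreducibility}, alternative (1) must supply at least two non-constant palindromic factors, so that the reverse transformation yields a non-trivial factorization $f=f_1f_2$.

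In that sense your claim fails exactly when your grouping consists of a single block $g_1g_1^{rev}$. Take the paper's example $F=(x^3+2x^2+4x+1)(x^3+4x^2+2x+1)$, coming from $f=x^3-x+4$ and $c=2$. Its only irreducible factors are the non-palindromic $g$ and $g^{rev}$, so its only palindromic divisors are $\pm1$ and $\pm F$. It lies in case (2) and in no non-trivial instance of case (1). So case (2) is not a redundant specialization. It is the essential alternative, and it is exactly what the $F(\pm1)$ and middle-coefficient criteria are designed to exclude.

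\textbf{The missing step.} Count the blocks produced by Steps 1--2.
\begin{enumerate}
\item If there are at least two blocks, you have (1) with genuine factors.
\item If there is exactly one, it cannot be an irreducible palindromic polynomial, since $F$ is reducible. So it is either $g_1g_1^{rev}$, giving (2) with $g=g_1$ non-palindromic, or it is $(x+1)^2$.
\end{enumerate}
This is precisely the paper's split: a single pair gives (2), while several pairs, or a pair together with palindromic factors, give (1).

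The leftover case $F=\pm(x+1)^2$ satisfies neither alternative non-trivially, because the only candidate $g=x+1$ is palindromic. The paper's Case~1 passes over this case silently as well. It is harmless downstream: it forces $\deg f=1$, and then $F(1)=\pm4$, $F(-1)=0$ and middle coefficient $\pm2$ meet none of the criteria of Theorem~\ref{theorem:main-irreducibility}. You should still record it explicitly as an exception.
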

\begin{proof}
    Assume that $F$ is reducible. We work by cases based on whether the irreducible factors are palindromic or not.
    
    \textit{Case 1: The factors of $F(x)$ are all palindromic.} Any odd degree palindromic polynomial contains the factor $(x+1)$ (\ref{Odd-degree palindromic polynomials contain factor (x+1)}). So the only irreducible odd degree palindromic polynomial is $(x+1)$. There must be an even number of these because $F$ has even degree. As such, we can combine any of those factors together to create an even degree polynomial. This is option (1).
    
    \textit{Case 2: The factors of $F(x)$ are not all palindromic.} By Lemma~\ref{grev}, if $F(x)$ has a non-palindromic factor $g$, it also has the factor $g^{rev}$, and the product $g \cdot g^{rev}$ is palindromic. If $F$ factors into a single such pair $g \cdot g^{rev}$, this is option (2). If $F$ has multiple such pairs, or a single pair and some palindromic factors, we can combine each pair into a palindromic polynomial $p_i$, giving option (1).
    
    In all cases, $F$ falls into option (1) or option (2).
\end{proof}

The following criteria determine when $F$ cannot factor as $g\cdot g^{rev}$. We give the proof from \cite{CafureCesaratto17} for the sake of completeness.

\begin{lem}\label{gxg^rev factoring criteria}
    Let $F(x)$ be a palindromic polynomial with degree $2n$.
    \begin{enumerate}
    \item If $|F(1)|$ or $|F(-1)|$ is not a perfect square, then $F(x)$ cannot factor as $\pm g(x)g^{rev}(x)$.
    \item If $F(1)$ and the coefficient of $x^n$ of $F(x)$---the middle coefficient of $F(x)$---have opposite signs, then $F(x)$ cannot factor as $\pm g(x)g^{rev}(x)$.
    \item If the middle coefficient of $F(x)$ is $0$ or $\pm 1$, then $F(x)$ cannot factor as $\pm g(x)g^{rev}(x)$.
    \end{enumerate}
\end{lem}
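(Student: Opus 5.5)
We assume $F=\pm g\,g^{rev}$ with $g=\sum_{i=0}^{m} g_i x^i\in\mathbb{Z}[x]$ and derive a contradiction in each of the three situations. Comparing degrees gives $2m=2n$, so $m=n$ and $g^{rev}(x)=x^n g(\tfrac1x)=\sum_i g_i x^{n-i}$. For (1) we evaluate at $x=\pm1$. We have $g^{rev}(1)=g(1)$, so $F(1)=\pm g(1)^2$. Similarly $g^{rev}(-1)=(-1)^n g(-1)$, so $F(-1)=\pm(-1)^n g(-1)^2$. Hence both $|F(1)|$ and $|F(-1)|$ are perfect squares, which proves (1).

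For (2) and (3) we compute the middle coefficient. In $g(x)\,g^{rev}(x)=\sum_{i,j} g_i g_j x^{i+n-j}$, the coefficient of $x^n$ comes from the terms with $i=j$. It therefore equals $\sum_{i=0}^n g_i^2$, so the middle coefficient of $F$ is $\pm\sum_i g_i^2$. For (2), both $F(1)=\pm g(1)^2$ and the middle coefficient $\pm\sum g_i^2$ carry the same sign $\pm$. If $g(1)\neq 0$ they cannot have opposite signs. If $F(1)=0$ there is no strict opposite sign to begin with. This proves (2), reading ``opposite signs'' as one strictly positive and the other strictly negative.

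For (3), the sum $\sum g_i^2$ is at least $g_0^2+g_n^2$. Here $g_n\neq0$ because $\deg g=n$. Also $g_0\neq 0$, since otherwise $x\mid g$ and then $F(0)=0$, which is impossible because a palindromic polynomial of exact degree $2n$ has constant term equal to its nonzero leading coefficient (as in Lemma~\ref{non-zero roots}). Thus $\sum g_i^2\ge 2$, which rules out middle coefficients $0$ and $\pm1$. When $n=0$ the statement is vacuous, since $F$ is then constant.

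The only delicate point is the degree bookkeeping. We must make sure $g^{rev}$ is taken with respect to the true degree of $g$, and confirm that $g_0\neq0$ so that $g^{rev}$ also has degree $n$. This is needed both to identify the $x^n$ coefficient as $\sum g_i^2$ and to get the lower bound of $2$ in (3). Everything else is a direct evaluation.
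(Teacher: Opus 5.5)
Your proof is correct and follows the paper's argument. For (1) you evaluate $g\,g^{rev}$ at $\pm1$, and for (2) you identify the middle coefficient as $\pm\sum g_i^2$, which carries the same sign as $F(1)=\pm g(1)^2$. For (3), your bound $\sum g_i^2\ge g_0^2+g_n^2\ge 2$ (using $F(0)\neq 0$) is a tidier form of the paper's observation that $g=\pm x^k$ would make $F=\pm x^k$ non-palindromic, and your explicit check that $g_0\neq0$, hence $\deg g=n$, supplies degree bookkeeping that the paper leaves implicit.
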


\begin{proof}
We work contrapositively. Assume that $F(x) = \pm g(x)\cdot g^{rev}(x)$. For (1), we find that \[|F(\pm1)| = | g(\pm1)g^{rev}(\pm1) |=| g(\pm1)(\pm1)^mg(\pm1)|=|g(\pm1)^2|.\]

For (2), write $g(x) = a_0 + a_1x + \cdots + a_mx^m$. The middle 
coefficient of $F(x) = \pm g \cdot g^{rev}$ is $\pm(a_0^2 + a_1^2 + \cdots + a_m^2)$, where the sign is the same $\pm$ as in $F(x) = \pm g \cdot g^{rev}$. Since $F(1) = \pm g(1)^2$ carries the same sign, then the coefficient of $x^n$ of $F(x)$,  which is the middle coefficient of $F(x)$, and $F(1)$ have the same sign.

For (3), writing $g(x)$ the same way, the middle coefficient of $F(x)$ is $\pm(a_0^2 + a_1^2 + \cdots + a_m^2)$. If this is $0$, then $g(x)$ is the zero polynomial. If this is $\pm 1$, then $g(x)$, which must have integer coefficients, can only have one non-zero coefficient. We can write it as $g(x) = \pm x^k$ for some $k$, and so $g^{rev}(x)=\pm1$. $F(x)$ cannot equal $g \cdot g^{rev}$ because $F(x)=\pm x^k$ is not palindromic.
\end{proof}

If $F$ does not factor as $g \cdot g^{rev}$, then it must either be irreducible or factor as a product of palindromic polynomials. To rule out the latter case, we construct a reverse transformation 
for palindromic polynomials.

\begin{lem}
    Fix $c \in \mathbb{Z}$. Consider the recursively defined sequence of polynomials $h_k$, where
    \begin{gather*}
        h_0(x) = 2 \\
h_1(x) = x- c\\
h_k(x) = (x-c)\cdot h_{k-1}(x) - h_{k-2}(x)
    \text{ for all } k \ge 2. 
    \end{gather*}
    
    Then, each $h_k$ is a polynomial with integer coefficients, and $h_k(\tfrac1 x + x + c) = \frac{1}{x^k} + x^k$.
\end{lem}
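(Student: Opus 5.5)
The plan is strong induction on $k$, proving both claims together: $h_k\in\mathbb{Z}[x]$ and $h_k(\tfrac1x+x+c)=x^k+x^{-k}$. Substituting $y=\tfrac1x+x+c$ gives $y-c=x+\tfrac1x$, so the statement amounts to saying that $h_k(y)$, rewritten in the variable $y-c$, is the classical Dickson-type polynomial expressing $x^k+x^{-k}$ in terms of $x+x^{-1}$.

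For the base cases I would just compute. We have $h_0(\tfrac1x+x+c)=2=x^0+x^{-0}$, and $h_1(\tfrac1x+x+c)=\tfrac1x+x+c-c=x+x^{-1}$. Both $h_0$ and $h_1$ clearly have integer coefficients, since $c\in\mathbb{Z}$.

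For the inductive step, assume both claims hold for $k-1$ and $k-2$, where $k\ge 2$.
\begin{itemize}
\item \emph{Integrality.} Since $h_k=(x-c)h_{k-1}-h_{k-2}$ is built from integer polynomials using products and differences, and $x-c\in\mathbb{Z}[x]$, it follows that $h_k\in\mathbb{Z}[x]$.
\item \emph{The identity.} Evaluating at $y=\tfrac1x+x+c$ and applying the inductive hypotheses gives
\[
h_k(y)=(x+x^{-1})(x^{k-1}+x^{-(k-1)})-(x^{k-2}+x^{-(k-2)}).
\]
Expanding the product yields $x^k+x^{k-2}+x^{-(k-2)}+x^{-k}$. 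The middle two terms cancel against the subtracted term, leaving $x^k+x^{-k}$.
\end{itemize}

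There is no real obstacle here. The only points needing care are that the recurrence requires two base cases, which is why strong induction is used, and that the identity is meant as an identity of Laurent polynomials, or equivalently of rational functions in $x$ for $x\neq 0$. That reading is legitimate because evaluating a polynomial at the Laurent polynomial $\tfrac1x+x+c$ is a ring homomorphism $\mathbb{Z}[y]\to\mathbb{Z}[x,x^{-1}]$, so the recurrence is carried through the substitution.

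It may also be worth recording, for later use in the reverse transformation, that $h_k$ is monic of degree $k$ for $k\ge1$. This follows from the same induction, since the term $(x-c)h_{k-1}$ has degree $k$ and leading coefficient $1$, while $h_{k-2}$ has lower degree.
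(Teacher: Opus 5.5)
Your proof is correct and is essentially the paper's argument: induction with the two base cases $k=0,1$, integrality read off from the recurrence, and the identity obtained by expanding $(x+x^{-1})(x^{k-1}+x^{-(k-1)})-(x^{k-2}+x^{-(k-2)})$. Your extra observation that $h_k$ is monic of degree $k$ for $k\ge 1$ is also correct, but the lemma does not need it.
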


\begin{proof}
    We proceed by induction on $k$. For $k = 0$, we see $h_0(\tfrac1 x + x + c) = \tfrac{1}{x^0} + x^0$. For $k = 1$, we see $h_1(\tfrac{1}{x} + x + c) = \tfrac 1 x + x$. Assume for some $n \in \mathbb{N}$ that $h_n(\tfrac{1}{x} + x + c) = \tfrac{1}{x^n} + x^n$ and that $h_{n-1}(\tfrac 1 x + x + c) = \tfrac{1}{x^{n-1}}+x^{n-1}$. Assume $h_n$ and $h_{n-1}$ are polynomials with integer coefficients. We find
    \[
        h_{n+1}(\tfrac1 x + x + c) =(\tfrac 1 x + x + c - c)\ h_n(\tfrac1 x +x+c) - h_{n-1}(\tfrac 1 x + x+c) \\
        = \tfrac{1}{x^{n+1}}+x^{n+1}.
\]

    Because $h_{n+1}$ is the product and sum of polynomials with integer coefficients, then $h_{n+1}$ is also a polynomial with integer coefficients.
\end{proof}

\begin{rem}
    The polynomials $h_k$ are shifted Dickson polynomials of the first kind. Specifically, $h_k(x) = D_k(x-c, 1)$ where $D_k(x, \alpha)$ denotes the Dickson polynomials with parameter $\alpha$ (see \cite{LidlMullenTurnwald1993}).
\end{rem}

The following lemma constructs the reverse transformation.

\begin{lem}\label{Reverse transformation}
    Fix $c \in \mathbb{Z}$. Given a palindromic polynomial $F(x)$ of degree $2n$, we write
    \[F(x) = a_0x^n + \sum _{k=1} ^{n} a_k(x^{n-k} +x^{n+k}) \]
    and construct 
    \[
    f(x) = a_0 + \sum_{k=1}^{n}a_kh_k(x)
    \]
    where each $h_k$ is determined by the given $c$ value. Then, $x^nf(\tfrac 1 x + x + c) = F(x).$
\end{lem}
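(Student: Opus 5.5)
The plan is a direct computation, using the previous lemma on the polynomials $h_k$ together with linearity of substitution. First I would check that the displayed expansion of $F$ is legitimate. Since $F$ is palindromic of degree $2n$, writing $F(x)=\sum_{j=0}^{2n} b_j x^j$, the identity $x^{2n}F(\tfrac1x)=F(x)$ gives $b_j=b_{2n-j}$. So the coefficients pair up as $b_{n-k}=b_{n+k}=:a_k$ for $1\le k\le n$, with $a_0:=b_n$ the unpaired middle coefficient. This yields exactly $F(x)=a_0x^n+\sum_{k=1}^n a_k(x^{n-k}+x^{n+k})$.

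Next, substituting $x\mapsto \tfrac1x+x+c$ into $f$ is a ring homomorphism $\mathbb{Z}[x]\to\mathbb{Z}[x,x^{-1}]$, so it is in particular linear. Hence
\[
f(\tfrac1x+x+c)=a_0+\sum_{k=1}^n a_k\, h_k(\tfrac1x+x+c).
\]
By the preceding lemma, $h_k(\tfrac1x+x+c)=x^{-k}+x^k$. Multiplying by $x^n$ gives
\[
x^nf(\tfrac1x+x+c)=a_0x^n+\sum_{k=1}^n a_k(x^{n-k}+x^{n+k})=F(x),
\]
as claimed.

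There is one subtlety I would address, and it is the only step needing care. One might want $f$ to have degree exactly $n$, because the transform $T_c$ uses $x^{\deg f}$. Since $h_k$ is monic of degree $k$ (clear by induction from the recurrence), $f$ has leading term $a_nx^n$, and $a_n=b_{2n}\neq0$. So $\deg f=n$, and the identity is literally $T_c(f)=F$. The coefficients of $f$ are integers because the $a_k$ and the $h_k$ are integral. No real obstacle is expected.
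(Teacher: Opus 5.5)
Your proof is correct and follows the paper's argument: you use the palindromic symmetry $b_j=b_{2n-j}$ to justify the displayed form of $F$, expand $f(\tfrac1x+x+c)$ linearly, apply $h_k(\tfrac1x+x+c)=x^{-k}+x^k$, and multiply by $x^n$. Your extra check that $\deg f=n$ ($h_k$ is monic of degree $k$ for $k\ge1$, and $a_n=b_{2n}\neq0$) is a worthwhile addition the paper leaves implicit; it is exactly what makes the identity read $T_c(f)=F$ when the lemma is used in the later proposition on palindromic factors.
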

\begin{proof}
    We directly apply the transformation $T_c$ to the constructed $f(x)$. We find
\[
        x^nf(\tfrac1 x +x + c) =  x^n \left( a_0 + \sum_{k=1}^n a_kh_k(\tfrac1 x + x + c) \right) 
        = F(x).
\]

Because $F(x)$ is palindromic of even degree, it has a center term $a_0x^n$, and the coefficients of $x^{n-k}$ and $x^{n+k}$ are equal, so the representation above is valid.
\end{proof}

\begin{prop}\label{Irreducible => no palindromic factors}
    If $f(x)$ is irreducible, then $F(x)$ cannot have any palindromic factors. 
\end{prop}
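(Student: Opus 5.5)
The plan is to argue by contradiction, using the reverse transformation of Lemma~\ref{Reverse transformation} to pull a palindromic factorization of $F$ back to a factorization of $f$. We read the statement as saying that $F$ has no palindromic factor $P$ with $0<\deg P<2n$. By Gauss's lemma we may take all factors in $\mathbb{Z}[x]$.

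\textbf{Odd-degree factors.} First we dispose of palindromic factors of odd degree. By Lemma~\ref{Odd-degree palindromic polynomials contain factor (x+1)}, such a factor is divisible by $x+1$, so $F(-1)=0$. Since
\[
F(-1)=(-1)^n f(-1-1+c)=(-1)^n f(c-2),
\]
the integer $c-2$ is a root of $f$. As $f$ is irreducible, this forces $n=1$. In that case $F$ is a quadratic, and its only possible proper factors are linear, so this degenerate case is either excluded (we assume $n\ge 2$, as in the applications) or treated separately. For $n\ge 2$, any palindromic factor of $F$ therefore has even degree $2m$ with $0<m<n$.

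\textbf{The cofactor is palindromic.} Write $F=PQ$ with $P$ palindromic of degree $2m$ and $Q\in\mathbb{Z}[x]$ of degree $2(n-m)$. Then
\[
Q^{rev}=\frac{x^{2n}F(1/x)}{x^{2m}P(1/x)}=\frac{F}{P}=Q,
\]
so $Q$ is also palindromic of even degree.

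\textbf{Pulling back and concluding.} Apply Lemma~\ref{Reverse transformation} to $P$ and to $Q$ with the same $c$. This gives polynomials $p,q\in\mathbb{Z}[x]$ with $T_c(p)=P$ and $T_c(q)=Q$. Their degrees are exactly $m$ and $n-m$, because the top-degree term comes from $a_m h_m$ and $h_m$ has degree $m$. By Lemma~\ref{Tf1f2) = T(f1)T(f2)},
\[
T_c(pq)=T_c(p)\,T_c(q)=PQ=F=T_c(f).
\]
Lemma~\ref{Tc Injective} then gives $f=pq$. Since both factors have positive degree, this contradicts the irreducibility of $f$.

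The main obstacle is bookkeeping rather than any deep step. We must check that the reverse transform really produces polynomials of positive degree (via the leading term of $h_k$), that $Q$ inherits palindromicity, and that the odd-degree and $n=1$ edge cases are handled as above.
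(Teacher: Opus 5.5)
Your proof is correct and takes the same route as the paper: you pull a palindromic factorization of $F$ back through the reverse transformation (Lemma~\ref{Reverse transformation}) and conclude using the multiplicativity and injectivity of $T_c$. Your extra bookkeeping fills in steps the paper leaves implicit: ruling out odd-degree factors via $F(-1)=(-1)^n f(c-2)$, checking that the cofactor $Q$ is palindromic, and confirming $\deg p=m$ and $\deg q=n-m$. Note also that excluding $n=1$ is genuinely necessary, not just a convenience, since $f=x-(c-2)$ gives $F=(x+1)^2$, a counterexample to the statement as written.
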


\begin{proof}
    We work contrapositively. Suppose $F(x)$ has palindromic factors. For simplicity, write this as only two factors, $F_1$ and $F_2$, both even-degree palindromic polynomials, but not necessarily irreducible. To see they must be even degree, we know that $(x+1)$ is a factor of every odd degree palindromic polynomial (\ref{Odd-degree palindromic polynomials contain factor (x+1)}), so we can split any odd degree factors into $(x+1)\cdot F_i$ where $F_i$ has to be of even degree. Because $F(x)$ has even degree, it must have an even number of these $x+1$ factors, which we can multiply together to become an even degree polynomial. By Lemma~\ref{product of palindromic polynomials is palindromic}, the product is palindromic.

    According to the reverse transformation in Lemma \ref{Reverse transformation}, we know we can create $f_1(x)$ and $f_2(x)$ such that $T_c(f_1) = F_1$ and $T_c(f_2) = F_2$. We then see that
\[F = F_1 F_2 = T_c(f_1)\ T_c(f_2) = T_c(f_1f_2).\]
    By Lemma \ref{Tc Injective}, because our transformation is injective, we know that $f_1f_2 = f$. So $f(x)$ is reducible.
\end{proof}

We now combine this result with Lemma~\ref{gxg^rev factoring criteria} to prove Theorem \ref{theorem:main-irreducibility}.

\begin{proof}[Proof of Theorem \ref{theorem:main-irreducibility}]
    By Proposition \ref{Irreducible => no palindromic factors}, since $f(x)$ is irreducible, then $F(x)$ will not factor as palindromic polynomials. According to Lemma \ref{gxg^rev factoring criteria}, if criteria (1), (2) or (3) is fulfilled, then $F(x)$ doesn't factor as $g\cdot g^{rev}$. Hence $F(x)$ must be irreducible.
\end{proof}

\begin{rem}
\label{rem:eval}
Observe that $f(c+2) = F(1)$ and $f(c-2) = \pm F(-1)$. Although $f(x)$ may have an odd degree power, any further $T^k(f)$ will have even degree. Since $c \in \mathbb{Z}$ and $f$ has integer coefficients, $F$ has 
integer coefficients. Checking whether $|F(1)|$ or $|F(-1)|$ is a 
perfect square requires only evaluating the polynomials at two integers, and checking to see if those are perfect squares. These operations remain
feasible even at cryptographic degree.
\end{rem}

\section{Monogenicity}
\label{sec:monogenicity}
In this section, we show that if $f$ generates a monogenic number field, then so does $F$ under a simple squarefreeness condition. Throughout the monogenicity discussion, we assume that $F = T_c(f)$ is irreducible; recall that irreducibility can be certified by Theorem ~\ref{theorem:main-irreducibility}. Our main objects of study in this section are the number field $K$, its ring of integers $\mathcal{O}_K$, and its discriminant $\disc(K)$. We assume some familiarity with these objects and their basic properties; further background and details can be found in \cite{neukirch1999algebraische}.

\begin{defn}
    Let $f(x)\in \mathbb{Z}[x]$ be a monic polynomial of degree $n$ with roots $\alpha_1,\dots,\alpha_n$. The discriminant of $f$ is
    \[\disc(f) = \prod_{1\le i<j\le n} (\alpha_i-\alpha_j)^2.\]
\end{defn}

We use the following discriminant criterion.
\begin{thm}
\label{disctest}
Let $f(x)$ be a monic irreducible polynomial with integer coefficients, $\alpha$ a root of $f$, and $K = \mathbb{Q}(\alpha)$. Then $\mathcal{O}_K = \mathbb{Z}[\alpha]$ if and only if $\disc(f) = \disc(K)$. 
In this case $\alpha$ generates $\mathcal{O}_K$, so $K$ is monogenic.
\end{thm}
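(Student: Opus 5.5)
The statement to prove is Theorem~\ref{disctest}: for monic irreducible $f$ with root $\alpha$, we have $\mathcal{O}_K=\mathbb{Z}[\alpha]$ if and only if $\disc(f)=\disc(K)$. The whole argument rests on the index formula
\[
\disc(f)=[\mathcal{O}_K:\mathbb{Z}[\alpha]]^2\,\disc(K).
\]
Once this formula is in hand, both directions are immediate. The index is a positive integer, and $\disc(K)\neq 0$, so $\disc(f)=\disc(K)$ holds exactly when the index equals $1$. Index $1$ means $\mathbb{Z}[\alpha]=\mathcal{O}_K$.

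To establish the formula, I would proceed as follows.

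\textbf{Step 1: integrality and bases.} Since $f$ is monic with integer coefficients, $\alpha$ is an algebraic integer, so $\mathbb{Z}[\alpha]\subseteq\mathcal{O}_K$. Both are free $\mathbb{Z}$-modules of rank $n=[K:\mathbb{Q}]$. The ring $\mathbb{Z}[\alpha]$ has basis $1,\alpha,\dots,\alpha^{n-1}$ because $f$ is irreducible of degree $n$. The ring $\mathcal{O}_K$ has some integral basis $\omega_1,\dots,\omega_n$; this is a standard fact, citable from \cite{neukirch1999algebraische}.

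\textbf{Step 2: compare discriminants.} Write $\alpha^{i-1}=\sum_j m_{ij}\omega_j$ with $M=(m_{ij})\in M_n(\mathbb{Z})$. By the structure theorem for finitely generated abelian groups (Smith normal form), $|\det M|=[\mathcal{O}_K:\mathbb{Z}[\alpha]]$. The discriminant of a tuple, $\det(\sigma_k(x_i))^2$, transforms by $\det(M)^2$ under a change of basis, which gives
\[
\disc(1,\alpha,\dots,\alpha^{n-1})=\det(M)^2\,\disc(\omega_1,\dots,\omega_n)=[\mathcal{O}_K:\mathbb{Z}[\alpha]]^2\,\disc(K).
\]

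\textbf{Step 3: identify the left side with $\disc(f)$.} The embeddings of $K$ send $\alpha$ to the roots $\alpha_k$ of $f$. So the left side is the squared Vandermonde determinant $\det(\alpha_k^{i-1})^2=\prod_{i<j}(\alpha_i-\alpha_j)^2$. This matches the paper's definition of $\disc(f)$ exactly; since $f$ is monic, no leading-coefficient factor appears.

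\textbf{Expected difficulty.} No step is a real obstacle, since all are textbook facts. The point needing most care is the identity $|\det M|$ equals the index, which follows from Smith normal form. I would also confirm $\disc(K)\neq0$, which follows from separability in characteristic $0$, so that the equivalence in the first paragraph is valid.
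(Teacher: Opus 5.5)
Your proposal is correct and matches the paper's proof. Both rest on the index formula $\disc(f)=[\mathcal{O}_K:\mathbb{Z}[\alpha]]^2\disc(K)$ and read off both directions from it; the paper simply cites the formula from Neukirch, while you sketch its standard derivation via change of basis and the Vandermonde determinant. Your remark that $\disc(K)\neq 0$ is needed for the ``if'' direction is a small point the paper leaves implicit.
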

\begin{proof}
The discriminant of $f(x)$ equals the discriminant of $\mathbb{Z}[\alpha]$, 
where $\alpha$ is a root of $f(x)$. Since $\mathcal{O}_K$ contains 
$\mathbb{Z}[\alpha]$, the index formula (see \cite{neukirch1999algebraische} Proposition 2.12) gives
$$\disc(\mathbb{Z}[\alpha]) = \disc(K)\,[\mathcal{O}_K : \mathbb{Z}[\alpha]]^2.$$
If $\disc(f) = \disc(K)$, then the two discriminants are equal, so 
$[\mathcal{O}_K : \mathbb{Z}[\alpha]]^2 = 1$. Hence 
$[\mathcal{O}_K : \mathbb{Z}[\alpha]] = 1$, so $\mathcal{O}_K = \mathbb{Z}[\alpha]$ 
and $\alpha$ generates $\mathcal{O}_K$.

Conversely, if $\mathcal{O}_K = \mathbb{Z}[\alpha]$, then 
$[\mathcal{O}_K : \mathbb{Z}[\alpha]] = 1$, and the same index formula gives $\disc(f) = \disc(K)$.
\end{proof}
\begin{rem}
Throughout the paper, when we say $f$ is \emph{monogenic} we mean 
$\disc(f) = \disc(K)$, i.e.\ that the root $\alpha$ of $f$ itself generates 
$\mathcal{O}_K$.
\end{rem}

Observe that when $f(x)$ is monic, $F(x) = x^nf\!\left(x + \frac{1}{x} + c\right)$ is also monic, with leading term $x^{2n}$. We consider only monic polynomials.

To compute the discriminant of $F(x)$, we first determine its roots.

\begin{lem}
\label{roots}
Let $f(x)\in \mathbb{Z}[x]$ be a monic polynomial with roots $\alpha_1,\dots,\alpha_n$ and define $F(x)=x^n f\!\left(x+\frac{1}{x}+c\right).$ Then the roots of $F(x)$ are
$$\frac{(\alpha_i-c)\pm\sqrt{(\alpha_i-c)^2-4}}{2},
\qquad i=1,\dots,n.$$
\end{lem}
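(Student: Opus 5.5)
The plan is to factor $F$ over $\mathbb{C}$ using the factorization of $f$, and then solve a quadratic equation for each factor. Since $f$ is monic of degree $n$ with roots $\alpha_1,\dots,\alpha_n$, we write $f(y)=\prod_{i=1}^n (y-\alpha_i)$. Substituting $y = x+\tfrac1x+c$ and multiplying by $x^n=\prod_{i=1}^n x$ gives
\[
F(x) = \prod_{i=1}^n x\left(x+\tfrac1x+c-\alpha_i\right) = \prod_{i=1}^n \left(x^2-(\alpha_i-c)x+1\right).
\]
This is an identity of polynomials in $\mathbb{C}[x]$, because each factor $x\,(x+\tfrac1x+c-\alpha_i)$ is already a polynomial.

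Next we find the roots of each factor. The quadratic $x^2-(\alpha_i-c)x+1$ is monic, so the quadratic formula gives its two roots (with multiplicity) as
\[
\frac{(\alpha_i-c)\pm\sqrt{(\alpha_i-c)^2-4}}{2}.
\]
Since $F$ is the product of these $n$ quadratics and has degree $2n$, its $2n$ roots, counted with multiplicity, are exactly these values as $i$ ranges over $1,\dots,n$. This is the claim.

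No step is a real obstacle. The only point needing care is the bookkeeping of the factor $x^n$: we distribute one factor of $x$ to each linear factor $(y-\alpha_i)$, which clears the $\tfrac1x$ term and makes the quadratic factorization exact. Lemma~\ref{non-zero roots} also confirms that no root is lost or introduced at $x=0$. One could alternatively argue that $\beta\neq 0$ is a root of $F$ exactly when $\beta+\beta^{-1}+c=\alpha_i$ for some $i$, but that pointwise argument does not track multiplicities. The factorization above handles multiplicities automatically, so we use it.
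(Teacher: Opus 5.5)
Your proposal is correct and follows the paper's proof essentially verbatim: write $f(y)=\prod_i (y-\alpha_i)$, give one factor of $x$ to each linear factor to get $F(x)=\prod_{i=1}^n\bigl(x^2-(\alpha_i-c)x+1\bigr)$, and apply the quadratic formula. Your remark that the factorization tracks multiplicities is a small addition beyond the paper. The appeal to $F(0)\neq 0$ is harmless but unnecessary, since the factorization is already an exact polynomial identity.
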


\begin{proof}
First write $f(x)=\prod_{i=1}^n (x-\alpha_i).$ Then we have
\[F(x)=x^n f\!\left(x+\frac{1}{x}+c\right)
= \prod_{i=1}^n x\!\left(x+\frac{1}{x}+c-\alpha_i\right) = \prod_{i=1}^n x^2+(c-\alpha_i)x+1.\] Hence the roots of $F(x)$ are the roots of the quadratics $x^2+(c-\alpha_i)x+1=0 \text{ for each } i \in \{1,\dots,n\}.$ The result then follows by applying the quadratic formula.
\end{proof}

Using the explicit description of the roots of $F(x)$, we now compute its discriminant.

\begin{thm}
\label{thm:disc}
Let $f(x)\in\mathbb{Z}[x]$ be monic and separable of degree $n$, and let $c\in\mathbb{Z}$.
Then
$$\disc(F)=f(c+2)f(c-2)\disc(f)^2.$$
\end{thm}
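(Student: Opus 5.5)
Our plan is to compute $\disc(F)$ from the factorization in Lemma~\ref{roots}:
\[F(x)=\prod_{i=1}^n q_i(x),\qquad q_i(x)=x^2+(c-\alpha_i)x+1,\]
using the standard multiplicativity formula for discriminants of monic polynomials,
\[\disc\Bigl(\prod_i q_i\Bigr)=\prod_i \disc(q_i)\prod_{i<j}\operatorname{Res}(q_i,q_j)^2.\]
This formula follows directly from the root-product definition by splitting the pairs of roots of $F$ into those coming from the same quadratic and those coming from different quadratics. We then compute the two kinds of factors separately.

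\textbf{Diagonal factors.} We have $\disc(q_i)=(\alpha_i-c)^2-4=(\alpha_i-c-2)(\alpha_i-c+2)$. Taking the product over $i$ and using $f(x)=\prod_i(x-\alpha_i)$ gives
\[\prod_i(\alpha_i-(c+2))\prod_i(\alpha_i-(c-2))=(-1)^n f(c+2)\cdot(-1)^n f(c-2)=f(c+2)f(c-2).\]

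\textbf{Cross factors.} Let $\beta,\beta^{-1}$ be the roots of $q_i$. Then
\[\operatorname{Res}(q_i,q_j)=q_j(\beta)q_j(\beta^{-1}).\]
For the key step, note that $q_j(x)=x\,(x+x^{-1}+c-\alpha_j)$ and $\beta+\beta^{-1}+c=\alpha_i$. Hence $q_j(\beta)=\beta(\alpha_i-\alpha_j)$ and likewise $q_j(\beta^{-1})=\beta^{-1}(\alpha_i-\alpha_j)$. Therefore
\[\operatorname{Res}(q_i,q_j)=(\alpha_i-\alpha_j)^2,\]
and so
\[\prod_{i<j}\operatorname{Res}(q_i,q_j)^2=\prod_{i<j}(\alpha_i-\alpha_j)^4=\disc(f)^2.\]

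Multiplying the diagonal and cross contributions yields the claim. Separability of $f$ is not actually needed for the identity itself, since both sides are polynomial in the $\alpha_i$. The main point requiring care is the bookkeeping in the multiplicativity formula, in particular checking that each unordered pair of roots from distinct quadratics is counted with the right multiplicity: the four pairs $(\beta_i^{\pm1},\beta_j^{\pm1})$ together give $\operatorname{Res}(q_i,q_j)$, and the square accounts for the squared differences. We also need to check that the sign conventions for the diagonal factors cancel, which holds because the two factors of $(-1)^n$ multiply to $1$. We would write this out directly from the definition as $\prod_{\text{pairs}}(\text{root differences})^2$ to avoid citing resultant theory.
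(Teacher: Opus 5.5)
Your proof is correct and has the same structure as the paper's. The same-index pairs give $\prod_i\bigl((\alpha_i-c)^2-4\bigr)=f(c+2)f(c-2)$, and each pair $i<j$ of distinct indices contributes $\bigl(\prod_{\epsilon,\delta}(\beta_{i,\epsilon}-\beta_{j,\delta})\bigr)^2=(\alpha_i-\alpha_j)^4$. The only real difference is how you evaluate that four-fold product: the paper writes the roots out with the quadratic formula and applies the difference of squares twice, while you note that $q_j(\beta)=\beta(\alpha_i-\alpha_j)$ for a root $\beta$ of $q_i$ and use $\beta\cdot\beta^{-1}=1$, which gives $(\alpha_i-\alpha_j)^2$ at once with no square roots (and your remark that separability of $f$ is not needed is also correct).
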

\begin{proof}
By Lemma~\ref{roots}, the roots of $F$ are
$$\beta_{i,\pm}=\frac{(\alpha_i-c)\pm\sqrt{(\alpha_i-c)^2-4}}{2},\qquad i=1,\dots,n,$$
where $\alpha_1,\dots,\alpha_n$ are the roots of $f$. Since $F$ is monic of degree
$2n$, its discriminant is the product of $(\beta_{i,\epsilon}-\beta_{j,\delta})^2$
over all unordered pairs of distinct roots. These pairs are of two types.

\textit{Same index.} The pair $(\beta_{i,+},\beta_{i,-})$ arising from a single root
$\alpha_i$ of $f$ contributes
$$(\beta_{i,+}-\beta_{i,-})^2=(\alpha_i-c)^2-4=\bigl(\alpha_i-(c+2)\bigr)\bigl(\alpha_i-(c-2)\bigr).$$
Taking the product over $i$ and using $\prod_{i=1}^n(\alpha_i-t)=(-1)^n f(t)$,
$$\prod_{i=1}^n(\beta_{i,+}-\beta_{i,-})^2=(-1)^n f(c+2)\cdot(-1)^n f(c-2)=f(c+2)f(c-2).$$

\textit{Distinct indices.} Write $A_i=\alpha_i-c$, so that the roots of $F$ are
$\beta_{i,\pm}=\tfrac12\bigl(A_i\pm\sqrt{A_i^2-4}\bigr)$, and set $B_i=A_i^2-4$. Fix
$i\neq j$. The four differences $\beta_{i,\epsilon}-\beta_{j,\delta}$ all have the form
$\tfrac12\bigl((A_i-A_j)\pm\sqrt{B_i}\pm\sqrt{B_j}\bigr)$, so pairing them by the sign of
$\sqrt{B_j}$ and applying $(a-b)(a+b)=a^2-b^2$ with $a=A_i-A_j$ gives
$$\prod_{\epsilon,\delta\in\{+,-\}}(\beta_{i,\epsilon}-\beta_{j,\delta})
=\frac{1}{16}\Bigl((A_i-A_j)^2-\bigl(\sqrt{B_i}-\sqrt{B_j}\bigr)^2\Bigr)
\Bigl((A_i-A_j)^2-\bigl(\sqrt{B_i}+\sqrt{B_j}\bigr)^2\Bigr).$$
Applying the difference of squares once more, this equals
$\tfrac{1}{16}\bigl(((A_i-A_j)^2-B_i-B_j)^2-4B_iB_j\bigr)$, and since
$(A_i-A_j)^2-B_i-B_j=-2A_iA_j+8$ we obtain
$$\prod_{\epsilon,\delta\in\{+,-\}}(\beta_{i,\epsilon}-\beta_{j,\delta})
=\frac{1}{16}\Bigl((-2A_iA_j+8)^2-4(A_i^2-4)(A_j^2-4)\Bigr)
=(A_i-A_j)^2=(\alpha_i-\alpha_j)^2.$$
Each such pair contributes its square to $\disc(F)$, so multiplying over all $i<j$,
$$\prod_{i<j}\ \prod_{\epsilon,\delta}(\beta_{i,\epsilon}-\beta_{j,\delta})^2
=\prod_{i<j}(\alpha_i-\alpha_j)^4=\disc(f)^2.$$

Combining the two contributions, $\disc(F)=f(c+2)f(c-2)\cdot\disc(f)^2.$
\end{proof}

To apply Theorem~\ref{disctest}, we relate the discriminant of $\mathbb{Q}[x]/(F)$ to the quantities computable from $f$. We do this using the following general result from \cite{cohen1993computational} (Proposition 4.4.8).

\begin{thm}\label{Conductor-Discriminant Formula}
    For any two number fields $A\subseteq B$ we have disc$(A)^{[B:A]} | $disc$(B)$.
\end{thm}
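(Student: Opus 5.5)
The plan is to prove the tower formula
\[
\disc(B)=\disc(A)^{[B:A]}\,N_{A/\mathbb{Q}}(\mathfrak{d}_{B/A}),
\]
where $\mathfrak{d}_{B/A}\subseteq\mathcal{O}_A$ is the relative discriminant ideal. The divisibility then follows, because $N_{A/\mathbb{Q}}(\mathfrak{d}_{B/A})$ is a positive integer and both sides have the same sign. Since the statement is a divisibility of integers, I will prove it one rational prime $p$ at a time. The claim is that $v_p(\disc(B))\ge [B:A]\,v_p(\disc(A))$ for every prime $p$.

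Fix $p$, set $m=[B:A]$ and $d=[A:\mathbb{Q}]$, and localize at the multiplicative set $\mathbb{Z}\setminus p\mathbb{Z}$. The ring $R=\mathcal{O}_A\otimes\mathbb{Z}_{(p)}$ is a semilocal Dedekind domain, hence a PID. The $R$-module $S=\mathcal{O}_B\otimes\mathbb{Z}_{(p)}$ is finitely generated and torsion free, so it is free of rank $m$. I choose an $R$-basis $\omega_1,\dots,\omega_m$ of $S$ and a $\mathbb{Z}$-basis $a_1,\dots,a_d$ of $\mathcal{O}_A$; the latter is also a $\mathbb{Z}_{(p)}$-basis of $R$. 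The products $a_k\omega_i$ then form a $\mathbb{Z}_{(p)}$-basis of $S$. Discriminants of $\mathbb{Z}_{(p)}$-bases of $S$ agree with $\disc(B)$ up to the square of a unit of $\mathbb{Z}_{(p)}$. So $v_p(\disc(B))=v_p\bigl(\det[\mathrm{Tr}_{B/\mathbb{Q}}(a_ka_l\omega_i\omega_j)]\bigr)$, and similarly $v_p(\disc(A))=v_p\bigl(\det[\mathrm{Tr}_{A/\mathbb{Q}}(a_ka_l)]\bigr)$.

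The key step, and the one I expect to be the main obstacle, is evaluating this determinant. I will use transitivity of the trace, $\mathrm{Tr}_{B/\mathbb{Q}}=\mathrm{Tr}_{A/\mathbb{Q}}\circ\mathrm{Tr}_{B/A}$. With $M=[\mathrm{Tr}_{B/A}(\omega_i\omega_j)]\in R^{m\times m}$, the $(dm)\times(dm)$ matrix has entries $\mathrm{Tr}_{A/\mathbb{Q}}(a_ka_l M_{ij})$. The identity I aim for is
\[
\det\bigl[\mathrm{Tr}_{A/\mathbb{Q}}(a_ka_lM_{ij})\bigr]=\det[\mathrm{Tr}_{A/\mathbb{Q}}(a_ka_l)]^{m}\cdot N_{A/\mathbb{Q}}(\det M).
\]
To prove it, I write $\mathrm{Tr}_{A/\mathbb{Q}}(x)=\sum_\sigma\sigma(x)$ over the $d$ embeddings $\sigma:A\to\overline{\mathbb{Q}}$, and let $P=[\sigma(a_k)]$. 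The big matrix then factors as $(P^{T}\otimes I_m)\cdot\mathrm{diag}_\sigma(\sigma(M))\cdot(P\otimes I_m)$, after a suitable ordering of the indices. Taking determinants gives $\det(P)^{2m}\prod_\sigma\sigma(\det M)$. This equals $\disc(A)^m N_{A/\mathbb{Q}}(\det M)$, since $\det(P)^2=\disc(A)$.

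Finally, $\det M$ lies in $R$ because $\mathrm{Tr}_{B/A}$ maps $S$ into $R$, as $R$ is integrally closed. Hence $N_{A/\mathbb{Q}}(\det M)\in\mathbb{Z}_{(p)}$ has nonnegative $p$-adic valuation. Therefore $v_p(\disc(B))\ge m\,v_p(\disc(A))$. This holds for all $p$, and $\disc(A)^m$ and $\disc(B)$ are nonzero integers, so $\disc(A)^{[B:A]}\mid\disc(B)$.
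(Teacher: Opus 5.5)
The paper gives no proof of this statement. It quotes it from Cohen (Proposition 4.4.8), and it could equally be read off from the tower formula the paper later quotes from Neukirch as Proposition~\ref{tower}, taking $K=\mathbb{Q}$.

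Your argument is a correct, self-contained proof of the $p$-local content of that tower formula. Localizing at $p$ legitimately yields a relative basis over the semilocal PID $\mathcal{O}_A\otimes\mathbb{Z}_{(p)}$. The factorization $(P^{T}\otimes I_m)\,\mathrm{diag}_\sigma(\sigma(M))\,(P\otimes I_m)$ is right. You correctly obtain $v_p(\disc(B))=[B:A]\,v_p(\disc(A))+v_p\bigl(N_{A/\mathbb{Q}}(\det M)\bigr)$ with the last term nonnegative.

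The citation keeps the paper short. Your route is longer but uses only transitivity of the trace and the structure of finitely generated modules over a PID, so it avoids the theory of the different and of relative discriminant ideals behind the cited results.

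It also yields something the paper needs later. When $\mathcal{O}_B$ is globally free over $\mathcal{O}_A$, no localization is required, and your determinant identity holds exactly over $\mathbb{Z}$, signs included. Applied to the basis $1,\beta$ of $\mathcal{O}_L=\mathcal{O}_K[\beta]$, it gives precisely the element-level identity $\disc(L)=\disc(K)^2N_{K/\mathbb{Q}}\bigl((\alpha-c)^2-4\bigr)$ used in the proof of Theorem~\ref{thm:nonmonogenic}. The paper obtains that identity from Proposition~\ref{tower} by identifying the relative discriminant ideal with the element $(\alpha-c)^2-4$.

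One correction to your opening remark: with signed discriminants, the two sides of the global formula need not have the same sign. The norm $N_{A/\mathbb{Q}}(\mathfrak{d}_{B/A})$ is positive, while for instance $A=\mathbb{Q}$, $B=\mathbb{Q}(i)$ gives $\disc(B)=-4$. The correct global form is $|\disc(B)|=|\disc(A)|^{[B:A]}N_{A/\mathbb{Q}}(\mathfrak{d}_{B/A})$. This does not affect your proof, which only compares $p$-adic valuations.
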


The following proposition establishes that $\mathbb{Q}(\alpha) \subseteq \mathbb{Q}(\beta)$, where $\beta$ is a root of $F(x)$ and $\alpha=\beta+\beta^{-1}+c$, which allows us to apply Theorem~\ref{Conductor-Discriminant Formula}.

\begin{prop}
\label{prop:subset}
Let $f\in\mathbb{Z}[x]$ be monic and irreducible. If $\beta$ is a root of $F$ and $\alpha=\beta+\beta^{-1}+c$, then $\alpha$ is a root of $f$. In particular, $\mathbb{Q}(\alpha)\subseteq\mathbb{Q}(\beta) \text{ and } [\mathbb{Q}(\beta) : \mathbb{Q}(\alpha)] \in \{1, 2\}.$
\end{prop}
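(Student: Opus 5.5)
The plan is to compute $f(\alpha)$ directly from the definition of $F$. First I need $\beta\neq 0$, so that $\alpha$ is defined. This follows from Lemma~\ref{non-zero roots}: since $F(0)$ equals the leading coefficient of $f$, which is $1$ because $f$ is monic, $0$ is not a root of $F$. Then
\[
0=F(\beta)=\beta^n f\bigl(\beta+\beta^{-1}+c\bigr)=\beta^n f(\alpha),
\]
and dividing by $\beta^n\neq 0$ gives $f(\alpha)=0$. Alternatively, Lemma~\ref{roots} already shows that $\beta$ satisfies $\beta^2+(c-\alpha_i)\beta+1=0$ for some root $\alpha_i$ of $f$. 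Rearranging gives $\beta+\beta^{-1}+c=\alpha_i$, so $\alpha=\alpha_i$.

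For the containment, note that $\alpha=\beta+\beta^{-1}+c$ is a rational expression in $\beta$ with integer $c$, so $\alpha\in\mathbb{Q}(\beta)$ and therefore $\mathbb{Q}(\alpha)\subseteq\mathbb{Q}(\beta)$.

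For the degree bound, observe that $\beta$ is a root of
\[
x^2+(c-\alpha)x+1\in\mathbb{Q}(\alpha)[x],
\]
because multiplying $\beta+\beta^{-1}+c-\alpha=0$ by $\beta$ gives exactly this polynomial evaluated at $\beta$. The minimal polynomial of $\beta$ over $\mathbb{Q}(\alpha)$ therefore divides a monic quadratic, so it has degree $1$ or $2$. Hence $[\mathbb{Q}(\beta):\mathbb{Q}(\alpha)]\in\{1,2\}$.

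None of these steps is a real obstacle. The one point to state explicitly is that $\beta\neq 0$, which is needed to form $\beta^{-1}$ and to cancel $\beta^n$. Irreducibility of $f$ is not needed for the containment or the degree bound. It only ensures that $[\mathbb{Q}(\alpha):\mathbb{Q}]=n$, which is how the proposition will be used later with Theorem~\ref{Conductor-Discriminant Formula}.
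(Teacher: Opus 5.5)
Your proposal is correct and follows essentially the same route as the paper's proof: you use $\beta\neq 0$ (Lemma~\ref{non-zero roots}) to cancel $\beta^n$ in $F(\beta)=\beta^n f(\alpha)$, note that $\alpha\in\mathbb{Q}(\beta)$, and bound the degree by the quadratic $x^2+(c-\alpha)x+1\in\mathbb{Q}(\alpha)[x]$. Your side remarks also hold: the alternative derivation via Lemma~\ref{roots} works, and irreducibility of $f$ is indeed not used for the containment or the degree bound.
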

\begin{proof}
    Let $\beta$ be a root of $F(x)$ i.e. $F(\beta) = 0.$ Plugging in $x = \beta$, we get $F(\beta) = \beta ^n f(\beta + \frac{1}{\beta}+c).$ By Lemma \ref{non-zero roots}, $\beta \neq 0$. Hence we must have $f(\beta+ \frac{1}{\beta}+c) = 0.$ Define $\alpha := \beta + \frac{1}{\beta}+c. $ Then $\alpha$ is a root of $f$. Clearly $\beta \in \mathbb{Q}(\beta)$, $\frac{1}{\beta} \in \mathbb{Q}(\beta)$, and $c \in \mathbb{Q}.$ It follows that $\alpha \in \mathbb{Q}(\beta).$ Since $\mathbb{Q}(\alpha)$ is a simple extension, it is by definition the smallest field extension containing both $\mathbb{Q}$ and $\alpha$. Therefore $\mathbb{Q}(\alpha) \subseteq \mathbb{Q}(\beta).$

    From $\alpha = \beta + \beta^{-1} + c$, multiplying through by $\beta$ gives $$\beta^2 - (\alpha - c)\beta + 1 = 0,$$ so $\beta$ is a root of the non-zero polynomial $g(x) = x^2 - (\alpha-c)x + 1 \in \mathbb{Q}(\alpha)[x]$. Since $\deg g = 2$, the minimal polynomial of $\beta$ over $\mathbb{Q}(\alpha)$ divides $g$, giving $[\mathbb{Q}(\beta) : \mathbb{Q}(\alpha)] \in \{1, 2\}$.
    \end{proof}

\begin{cor}
\label{cor:Fdegree}
With the same notation as Proposition~\ref{prop:subset}, suppose \(F(x)\) is irreducible. Then $[\mathbb{Q}(\beta) : \mathbb{Q}(\alpha)] = 2.$
\end{cor}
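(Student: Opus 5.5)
The plan is a degree count in the tower $\mathbb{Q}\subseteq\mathbb{Q}(\alpha)\subseteq\mathbb{Q}(\beta)$. First I compute the two absolute degrees. Since $F$ is monic of degree $2n$ and irreducible over $\mathbb{Q}$, and $\beta$ is a root of $F$, $F$ is the minimal polynomial of $\beta$. Hence $[\mathbb{Q}(\beta):\mathbb{Q}]=2n$.

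Next, by Proposition~\ref{prop:subset}, $\alpha=\beta+\beta^{-1}+c$ is a root of $f$. The polynomial $f$ is monic and irreducible of degree $n$; irreducibility is part of the hypotheses carried over from Proposition~\ref{prop:subset}, and it also follows from the earlier lemma that reducible $f$ gives reducible $F$. So $f$ is the minimal polynomial of $\alpha$, and $[\mathbb{Q}(\alpha):\mathbb{Q}]=n$.

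By the tower law,
\[
[\mathbb{Q}(\beta):\mathbb{Q}(\alpha)]=\frac{[\mathbb{Q}(\beta):\mathbb{Q}]}{[\mathbb{Q}(\alpha):\mathbb{Q}]}=\frac{2n}{n}=2.
\]
This is consistent with the bound $\{1,2\}$ from Proposition~\ref{prop:subset}; that bound is not actually needed, only the inclusion $\mathbb{Q}(\alpha)\subseteq\mathbb{Q}(\beta)$.

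There is no genuine obstacle. The only point needing care is that $n=\deg f$ matches the exponent in the definition of $F$, so that $\deg F=2n$ exactly. This holds because $f$ is monic: the lemma on $T_c$ shows that $F$ is palindromic of degree $2n$ with leading coefficient $1$.
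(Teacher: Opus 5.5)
Your proof is correct and matches the paper's: both get $[\mathbb{Q}(\beta):\mathbb{Q}]=2n$ from the irreducibility of $F$ and $[\mathbb{Q}(\alpha):\mathbb{Q}]=n$ from the irreducibility of $f$, then apply the tower law to the inclusion from Proposition~\ref{prop:subset}. One small correction: $\deg F=2n$ holds for any $f$ of degree $n$, because the leading coefficient of $F$ equals that of $f$, so this step does not depend on monicity.
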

\begin{proof}
    If $F(x)$ is irreducible, then we have $[\mathbb{Q}(\beta):\mathbb{Q}] = 2n$ and since $[\mathbb{Q}(\alpha):\mathbb{Q}] = n$, by the multiplicativity of field extension degrees (see \cite{dummit2004abstract} Sec. 13.2, Theorem 14) we obtain $[\mathbb{Q}(\beta):\mathbb{Q}(\alpha)] = 2$.
\end{proof}

Theorem~\ref{theorem:main-irreducibility} gives the criteria for when $F$ is irreducible. In that case Corollary~\ref{cor:Fdegree} gives $[\mathbb{Q}(\beta) : \mathbb{Q}(\alpha)] = 2$. Then applying Theorem~\ref{Conductor-Discriminant Formula}, we have
$$\disc(\mathbb{Q}(\alpha))^2 \mid 
\disc(\mathbb{Q}(\beta)).$$
Assuming $f$ is monogenic, we have $\disc(f) = 
\disc(\mathbb{Q}(\alpha))$, so we obtain 
$\disc(f)^2 \mid \disc(\mathbb{Q}(\beta))$, 
and we may write $\disc(\mathbb{Q}(\beta)) = 
\disc(f)^2 \cdot k$ for some integer $k$.

We now compute the discriminant ratio and give a proof of Theorem~\ref{thm:monogenic}. The calculation is
\begin{align*} 
\frac{\disc(F)}{\disc(\mathbb{Q}(\beta))} 
&= \frac{\disc(f)^2 \cdot f(c+2)f(c-2)}{\disc(f)^2 \cdot k} \\ &= \frac{f(c+2)f(c-2)}{k} \\ 
&= [\mathcal{O}_{\mathbb{Q}(\beta)} : \mathbb{Z}[\beta]]^2
\end{align*}

\begin{proof}[Proof of Theorem~\ref{thm:monogenic}]
We have $$[\mathcal{O}_K : \mathbb{Z}[\beta]]^2 = \frac{f(c+2)f(c-2)}{k}.$$ Since the index squared is an integer, it divides $f(c+2)f(c-2)$. Assume $f(c+2)f(c-2)$ is squarefree. Then its only square divisor is $1$.
Hence we must have $[\mathcal{O}_K : \mathbb{Z}[\beta]] = 1,$ so $\mathcal{O}_K = \mathbb{Z}[\beta]$ and the field is monogenic.
\end{proof}

\begin{rem}
Note that $f(c+2)f(c-2)$ will be squarefree when $f(c+2)$ and $f(c-2)$ are individually squarefree, and have a greatest common divisor of $1$.
\end{rem}

We now discuss how to guarantee that $F(x)$ is not monogenic and give a proof of Theorem~\ref{thm:nonmonogenic}. For this, we use the following result from 
\cite{neukirch1999algebraische} (Chapter III, Corollary~2.10), 
which we state here in terms of discriminants using Theorem~2.9 of 
the same reference.~\footnote{The original theorem in the reference is given in terms of the different rather than the discriminant.}

\begin{prop}
\label{tower}
For a tower of number fields $K \subseteq L \subseteq M$, one has
\[\operatorname{disc}(M|K) = \operatorname{disc}(L|K)^{[M:L]} 
\cdot N_{L|K}(\operatorname{disc}(M|L)).\]
\end{prop}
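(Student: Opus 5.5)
The plan is to pass through the different, where the tower behaviour is multiplicative, and then take norms. For a finite extension $E\mid F$ of number fields, write $\mathfrak{D}_{E|F}$ for the different. I will use the two facts packaged in Theorem~2.9 of \cite{neukirch1999algebraische}. First, $\mathfrak{D}_{E|F}^{-1}$ is the dual module
\[
\{x\in E:\ \mathrm{Tr}_{E|F}(x\mathcal{O}_E)\subseteq \mathcal{O}_F\}.
\]
Second, $\operatorname{disc}(E|F)=N_{E|F}(\mathfrak{D}_{E|F})$, an ideal of $\mathcal{O}_F$.

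The first and main step is transitivity of the different:
\[
\mathfrak{D}_{M|K}=\mathfrak{D}_{M|L}\,\mathfrak{D}_{L|K}.
\]
To prove it, I will use $\mathrm{Tr}_{M|K}=\mathrm{Tr}_{L|K}\circ\mathrm{Tr}_{M|L}$. I then show that $x\in\mathfrak{D}_{M|K}^{-1}$ if and only if $x\mathfrak{D}_{L|K}\subseteq\mathfrak{D}_{M|L}^{-1}$. One direction unwinds as follows: $\mathrm{Tr}_{M|K}(x\mathcal{O}_M)\subseteq\mathcal{O}_K$ means $\mathrm{Tr}_{L|K}\bigl(\mathrm{Tr}_{M|L}(x\mathcal{O}_M)\bigr)\subseteq\mathcal{O}_K$. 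Since $\mathrm{Tr}_{M|L}(x\mathcal{O}_M)$ is an $\mathcal{O}_L$-module, this is equivalent to $\mathrm{Tr}_{M|L}(x\mathcal{O}_M)\subseteq\mathfrak{D}_{L|K}^{-1}$. Multiplying by $\mathfrak{D}_{L|K}$ and using $\mathcal{O}_L$-linearity of $\mathrm{Tr}_{M|L}$, this becomes
\[
\mathrm{Tr}_{M|L}\bigl(x\mathfrak{D}_{L|K}\mathcal{O}_M\bigr)\subseteq\mathcal{O}_L,
\]
which says exactly $x\mathfrak{D}_{L|K}\mathcal{O}_M\subseteq\mathfrak{D}_{M|L}^{-1}$. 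Hence $\mathfrak{D}_{M|K}^{-1}=\mathfrak{D}_{M|L}^{-1}\mathfrak{D}_{L|K}^{-1}$, and inverting gives the claim. I expect this to be the main obstacle. The delicate point is the equivalence ``$\mathrm{Tr}_{L|K}(\mathfrak{a})\subseteq\mathcal{O}_K$ iff $\mathfrak{a}\subseteq\mathfrak{D}_{L|K}^{-1}$'' for a fractional $\mathcal{O}_L$-ideal $\mathfrak{a}$. This requires $\mathfrak{a}$ to be an $\mathcal{O}_L$-module and not just a set, and it relies on the invertibility of fractional ideals in the Dedekind domain $\mathcal{O}_L$.

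Second, I take norms down to $K$. Using $\operatorname{disc}(M|K)=N_{M|K}(\mathfrak{D}_{M|K})$, multiplicativity of the ideal norm, and transitivity $N_{M|K}=N_{L|K}\circ N_{M|L}$, I get
\[
\operatorname{disc}(M|K)=N_{L|K}\bigl(N_{M|L}(\mathfrak{D}_{M|L})\bigr)\cdot N_{L|K}\bigl(N_{M|L}(\mathfrak{D}_{L|K}\mathcal{O}_M)\bigr).
\]
The first factor is $N_{L|K}(\operatorname{disc}(M|L))$.

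Third, for the second factor I use that an ideal $\mathfrak{a}$ of $\mathcal{O}_L$, extended to $\mathcal{O}_M$, satisfies $N_{M|L}(\mathfrak{a}\mathcal{O}_M)=\mathfrak{a}^{[M:L]}$. This can be checked on primes via $\sum_i e_if_i=[M:L]$. The second factor is therefore
\[
N_{L|K}(\mathfrak{D}_{L|K})^{[M:L]}=\operatorname{disc}(L|K)^{[M:L]},
\]
which yields the stated formula. In our application $K=\mathbb{Q}$, so both sides are ideals of $\mathbb{Z}$, and the identity is read as an identity of integers up to sign.
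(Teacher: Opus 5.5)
Your proposal is correct and follows the same route as the paper. The paper does not prove this proposition itself: it cites Neukirch, where the result comes from transitivity of the different, $\mathfrak{D}_{M|K}=\mathfrak{D}_{M|L}\mathfrak{D}_{L|K}$, followed by taking norms using $\operatorname{disc}=N(\mathfrak{D})$ (Theorem~2.9), which is exactly your three steps.

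One small correction to your commentary: the equivalence $\mathrm{Tr}_{L|K}(\mathfrak{a})\subseteq\mathcal{O}_K \iff \mathfrak{a}\subseteq\mathfrak{D}_{L|K}^{-1}$ needs only that $\mathfrak{a}$ is an $\mathcal{O}_L$-module. Invertibility of fractional ideals is actually used in two other places: when you multiply through by $\mathfrak{D}_{L|K}$, and when you solve $x\mathfrak{D}_{L|K}\mathcal{O}_M\subseteq\mathfrak{D}_{M|L}^{-1}$ for $x$.
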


\begin{proof}[Proof of Theorem~\ref{thm:nonmonogenic}]
We work contrapositively. Let $K = \mathbb{Q}(\alpha)$ and $L = \mathbb{Q}(\beta)$. Let $I_\alpha = [\mathcal{O}_K : \mathbb{Z}[\alpha]] \text{ and } I_\beta = [\mathcal{O}_L : \mathbb{Z}[\beta]] = 1.$ Then we have $\disc(f) = I_\alpha^2 \disc(K)$ and $\disc(F) = \disc(L).$ We know that $\disc(F) = \disc(f)^2 \cdot f(c+2)f(c-2),$ so we obtain
$$\disc(L) = \big(I_\alpha^2 \disc(K)\big)^2 \cdot f(c+2)f(c-2).$$
Hence
\begin{equation}
    \disc(L) = I_\alpha^4 \disc(K)^2 \cdot f(c+2)f(c-2).
    \label{eq:disc-L}
\end{equation}
By Proposition ~\ref{tower} applied to the tower $\mathbb{Q} \subseteq K \subseteq L$ with $[L:K]=2$, we also have
\[\disc(L) = N_{K/\mathbb{Q}}(\disc(L/K)) \cdot \disc(K)^2.\]

Since $\mathcal{O}_L = \mathbb{Z}[\beta]$ by assumption, we have 
$\mathbb{Z}[\beta] \subseteq \mathcal{O}_K[\beta] \subseteq \mathcal{O}_L = \mathbb{Z}[\beta]$, and hence $\mathcal{O}_K[\beta] = \mathcal{O}_L$. Recall that $\beta$ satisfies the polynomial 
$g(x) = x^2 - (\alpha - c)x + 1 \in K[x]$. Then we can write $\disc(g) = [\mathcal{O}_L : \mathcal{O}_K[\beta]]^2 \cdot \disc(L/K) = 1 \cdot \disc(L/K) = \disc(L/K).$ Hence $\disc(L/K) = \disc(g) = (\alpha - c)^2 - 4$. 

We compute the field norm as the product of the conjugates:
\begin{align*}
N_{K/\mathbb{Q}}\big((\alpha-c)^2-4\big) &= \prod_{i=1}^n\big((\alpha_i-c)^2-4\big) \\
&= f(c+2)f(c-2).
\end{align*}
Hence the tower formula gives $\disc(L) = f(c+2)f(c-2)\cdot\disc(K)^2.$ Comparing with ~\eqref{eq:disc-L},
\[I_\alpha^4 \disc(K)^2 \cdot f(c+2)f(c-2) = f(c+2)f(c-2)\cdot\disc(K)^2.\]
Dividing both sides by $\disc(K)^2 \cdot f(c+2)f(c-2)$ gives $I_\alpha^4 = 1$, and hence $I_\alpha = 1$. Therefore $\mathcal{O}_K = \mathbb{Z}[\alpha]$ and $f$ is monogenic.
\end{proof}

\section{Examples}
\label{sec:example}
In this section, we list an example of a monogenic number field of degree $2^9=512$ generated using our construction.

We remind the reader of the notation $T_c$, where $F(x) = T_c(f)= x^nf(\tfrac 1 x +x+c)$ where $n$ is the degree of $f$, and $c$ is a fixed integer. We write $T_c(T_c(f)) = T_c^2(f)$, and so on for additional iterations.

We consider polynomials of the form $f(x) = x^4+nx+n$ for $n \in \mathbb{Z}$. A direct computation gives $f(-1) = 1$. Observe that with $c = 1$, we have $c-2 = -1$, so Remark~\ref{rem:eval} gives us
$$ T_{1}(f)(1) = f(3), \qquad T_{1}(f)(-1) = f(-1) = 1. $$ Since $f(c+2)f(c-2) = T_{1}(f)(1)\,T_{1}(f)(-1)$, the squarefree condition simplifies to squarefreeness of $T_{1}(f)(1) = f(3)$. We first show that $T^k_1(f)(-1) = 1$ for every $k$.
\begin{lem}
\label{lem:eval-one}
    For $f(x) = x^4 + nx + n$ with $c = 1$, we have $T^k_1(f)(-1) = 1$ for all $k \geq 0$.
\end{lem}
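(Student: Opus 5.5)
The plan is induction on $k$, driven by one identity: for any polynomial $g$ of degree $m$ and any $c$, evaluating at $x=-1$ gives
\[
T_c(g)(-1) = (-1)^m g\bigl(-1 + (-1)^{-1} + c\bigr) = (-1)^m g(c-2).
\]
With $c=1$ this becomes $T_1(g)(-1) = (-1)^m g(-1)$. This is the content behind Remark~\ref{rem:eval} that $f(c-2)=\pm F(-1)$, with the sign made explicit.

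For the base case $k=0$, we compute directly that $f(-1) = 1 - n + n = 1$. For the inductive step, set $g = T_1^{k}(f)$ and assume $g(-1) = 1$. Its degree is $m = 2^k\cdot 4$, which is even for every $k\ge 0$ because $\deg f = 4$ and each application of $T_1$ doubles the degree. The identity above then gives
\[
T_1^{k+1}(f)(-1) = (-1)^{m} g(-1) = 1,
\]
which completes the induction.

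The only step needing care is the sign $(-1)^m$. It is harmless here precisely because $\deg f=4$ is even, and I will state this explicitly. If the base degree were odd, the sign would only matter at the first step, since all later iterates have even degree. So there is no real obstacle: the proof is a one-line evaluation plus a parity check on the degree.
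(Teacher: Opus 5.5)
Your proof is correct and follows essentially the same route as the paper: the base case $f(-1)=1$, then the evaluation identity $T_1(g)(-1)=(-1)^{\deg g}\,g(-1)$ applied inductively. The paper uses that $\deg T_1^{k-1}(f)=4\cdot 2^{k-1}$ is divisible by $4$, while you use only that it is even, which is all that is needed.
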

\begin{proof}
    The base case $f(-1) = 1$ is immediate. For the inductive step, assume that $T^{k-1}_1(f)(-1) = 1$. Since $T^k_1(f)(-1) = (-1)^{\deg {T^{k-1}_1(f)}} T^{k-1}_1(f)(-1)$ by construction, and $\deg T^{k-1}_1(f) = 4 \cdot 2^{k-1}$ is divisible by 4, we have $(-1)^{\deg T^{k-1}_1(f)} = 1$. Therefore $T^k_1(f)(-1) =  1$.
\end{proof}
Using a computer algebra system, one can easily find specific values of $n$ for which $f$ is irreducible and monogenic. For example, the values $n \in \{-5, -3, -2, -1, 1, 2, 6, 7\}$ all give monogenic fields. We take $n = 1$ and $c=1$ for simplicity, which gives $f = x^4+x+1$ with $\disc(f) = 229$ and with $f(3) = 85 = 5 \cdot 17$ squarefree. Since $f$ has degree 4, after $k=7$ iterations, we obtain a field of degree $4 \cdot 2^7 = 512$ (assuming the transformed polynomial is irreducible at each iteration) which is a standard cryptographic parameter size. Recall that to verify monogenicity of this field, we need to check that $T_1^1(f)(3), \ldots,T^6_1(f)(3)$ are all squarefree. In particular, the monogenicity of the degree 512 field follows from the squarefreeness of $T^6_1(f)(3)$. We verified this computationally using SageMath~\cite{sagemath}. Squarefreeness was checked by complete integer factorization of each $T_1^k(f)(3)$.

\begin{center}
\begin{tabular}{c c c c}
\toprule
$k$ & $\deg T_1^k(f)$ & \#digits of $T_1^k(f)(3)$ & squarefree \\
\midrule
1 & 8   & 5   & \checkmark \\
2 & 16  & 10  & \checkmark \\
3 & 32  & 20  & \checkmark \\
4 & 64  & 39  & \checkmark \\
5 & 128 & 77  & \checkmark \\
6 & 256 & 154 & \checkmark \\
\bottomrule
\end{tabular}
\end{center}

The first few values are
\[
\begin{aligned}
T_1^1(f)(3) &= 28993 \\
T_1^2(f)(3) &= 2232559585 \\
T_1^3(f)(3) &= 10649905326423671233,
\end{aligned}
\]
each of which is squarefree (see Appendix~\ref{app:polys} for the full factorization).
\begin{rem}
The same squarefreeness also certifies irreducibility. For $k \ge 1$, we have $|T_1^k(f)(1)|  = T_1^{k-1}(f)(3)$ and 
$|T_1^k(f)(-1)| = 1$ by Lemma~\ref{lem:eval-one}. Since $T_1^{k-1}(f)(3)$ 
is squarefree and not $\pm 1$, it is not a perfect square, so Theorem~\ref{theorem:main-irreducibility} condition (1) makes $T_1^k(f)$ irreducible. Inductively from the base $f$, this gives irreducibility at every level up to $T_1^7(f)$.
\end{rem}
Therefore, $T^7_1(f)$ generates a monogenic number field of degree $512$. Similar analysis can be performed on $f$ for other values of $n$; however, we believe such computations to become increasingly expensive for larger values of $n$.

\section{Open Questions}
\label{sec:open}
It is widely conjectured that a polynomial $f \in \mathbb{Z}[x]$ with no repeated roots and no fixed square divisor takes infinitely many squarefree values; Granville \cite{granville1998abc} showed this follows from the ABC conjecture. We therefore propose the following conjecture.

\begin{conj}
Let $f\in\mathbb{Z}[x]$ be monic, irreducible, and monogenic, and set $g(x)=f(x+2)f(x-2)$. Suppose $g$ has no repeated roots and no fixed square divisor, and that $T_c(f)$ is irreducible for infinitely many $c$. Then $T_c(f)$ generates a monogenic number field of degree $2n$ for infinitely many $c$. In particular, the transform produces infinitely many monogenic number fields of degree $2n$.
\end{conj}

We leave the reader with the following questions.
\begin{enumerate}
\item Our monogenicity criterion requires checking that $f(c+2)f(c-2)$ is squarefree. It remains an open problem in algorithmic number theory to find a simple procedure to check whether an integer is squarefree or not \cite{complexity}. Are there efficient algorithms to check squarefreeness without needing factorization?\footnote{Booker, Hiary, and Keating \cite{bookerhiarykeating2015} give an algorithm to prove an integer is squarefree without factoring, conditional on the Generalized Riemann Hypothesis. Apart from this, we are not aware of any progress on this problem.}

\item For which base polynomials and values of $c$ does the transform produce Galois extensions? Can one characterize when $\mathbb{Q}(\beta)$ is Galois directly in terms of $f$ and $c$?

\item Conditional on the ABC conjecture, our transform produces 
infinitely many monogenic fields of each degree $2^kn$ for each $k$. 
Can this be proved unconditionally?
\end{enumerate}

\bibliography{monogenic}{}
\bibliographystyle{alpha}

\appendix
\section{Explicit integer evaluations for the degree-512 tower}
\label{app:polys}
Recall $f(x)=x^4+x+1$ and $c=1$. Monogenicity of $T_1^k(f)$ at each level
follows from the squarefreeness of $T_1^{k-1}(f)(3)$, verified in
SageMath~\cite{sagemath} by complete integer factorization. We record the
evaluations $T_1^k(f)(3)$ below.

$T_1^1(f)(3) = 28993$

$T_1^2(f)(3) = 2232559585$

$T_1^3(f)(3) = 10649905326423671233$

$T_1^4(f)(3) = 211634739909134148439038192622160909185$

For $k=5,6$ the values have $77$ and $154$ decimal digits:

$ T_1^5(f)(3) = \seqsplit{76149772278514573380264313010048717269966292252651636717168679489946080667393} $

$ T_1^6(f)(3) = \seqsplit{9210200409588115967821453482497876632413382320180805400109658859589311158421746780705086283420230400187252209650480048817017657607843223069506588383235585} $

We now give the full factorization of these integers from SageMath to establish that they are squarefree.

$T_1^1(f)(3) = 79 \cdot 367$

$T_1^2(f)(3) = 5 \cdot 11437 \cdot 39041$

$T_1^3(f)(3) = 19 \cdot 333323 \cdot 1681616129009$

$T_1^4(f)(3) = 5 \cdot 13921 \cdot 3040510594197746547504319985951597$

$T_1^5(f)(3) = 23 \cdot 18859 \cdot 135043 \cdot 1082987312549587 \cdot \seqsplit{1200401878325723969744954187178965938599958281402589}$

$T_1^6(f)(3) = 5 \cdot 103435369 \cdot 9434375759 \cdot 2401020462143 \cdot \seqsplit{17516469089252729797} \cdot p,$
where $p = \seqsplit{44882228243845612112027161806541341327947594224808218347412922957193041078034608366905827295622491336337}$.
\end{document}